\newcommand{\dntri}{\bigtriangledown}
\newcommand{\uptri}{\triangle}
\newcommand{\ZZ}{\mathbb{Z}}
\newcommand{\PS}{\mathfrak{S}}
\DeclareMathOperator{\per}{perm}
\DeclareMathOperator{\sgn}{sgn}
\DeclareMathOperator{\msgn}{msgn}
\DeclareMathOperator{\lpsgn}{lpsgn}
\def\urltilda{\kern -.15em\lower .7ex\hbox{\~{}}\kern .04em}
\numberwithin{figure}{section}
\numberwithin{equation}{section}
\newtheorem{theorem}{Theorem}[section]
\newtheorem{lemma}[theorem]{Lemma}
\newtheorem{proposition}[theorem]{Proposition}
\newtheorem{corollary}[theorem]{Corollary}
\theoremstyle{definition}
\newtheorem{definition}[theorem]{Definition}
\newtheorem{remark}[theorem]{Remark}
\newtheorem{example}[theorem]{Example}
\begin{document}

\title{Signed lozenge tilings}

\author[D.\ Cook II]{David Cook II${}^{\star}$}
\address{Department of Mathematics \& Computer Science, Eastern Illinois University, Charleston, IL 46616}
\email{\href{mailto:dwcook@eiu.edu}{dwcook@eiu.edu}}

\author[U.\ Nagel]{Uwe Nagel}
\address{Department of Mathematics, University of Kentucky, 715 Patterson Office Tower, Lexington, KY 40506-0027}
\email{\href{mailto:uwe.nagel@uky.edu}{uwe.nagel@uky.edu}}

\thanks{
    Part of the work for this paper was done while the authors were partially supported by the National Security Agency
    under Grant Number H98230-09-1-0032.
    The second author was also partially supported by the National Security Agency under Grant Number H98230-12-1-0247
    and by the Simons Foundation under grants \#208869 and \#317096.\\
    \indent ${}^{\star}$ Corresponding author.}

\keywords{Determinants, lozenge tilings, non-intersecting lattice paths, perfect matchings, puncture}
\subjclass[2010]{05A15, 05A19, 05B45}

\begin{abstract}
    It is well-known that plane partitions, lozenge tilings of a hexagon, perfect matchings on a honeycomb graph, and families of
    non-intersecting lattice paths in a hexagon are all in bijection.  In this work we consider regions that are more general than hexagons. They are obtained by further removing upward-pointing triangles. We
    call the resulting shapes triangular regions.  We establish signed versions of the latter three bijections for  triangular regions.  We first investigate the tileability
    of triangular regions by lozenges.  Then we use perfect matchings and families of non-intersecting lattice paths to define two signs of a
    lozenge tiling.  Using a new method that we  call resolution of a puncture, we show that the two signs are in fact equivalent. As a consequence, we obtain the equality of determinants, up to sign, that enumerate signed perfect matchings and signed families of lattice paths of a triangular region, respectively. We also describe triangular regions, for which the signed enumerations agree with the unsigned enumerations.
\end{abstract}

\maketitle

\section{Introduction} \label{sec:intro}

It is a useful and well-known fact that plane partitions in an $a \times b \times c$ box, lozenge tilings of a hexagon with side lengths $(a,b,c)$, families of non-intersecting lattice path in such a hexagon, and perfect matchings of a suitable honeycomb graph are all in bijection. In this work we refine the latter three bijections by establishing signed versions of them for regions that are more general than hexagons.

More specifically, we consider certain subregions of a triangular region $\mathcal{T}_d$.  The latter is an equilateral triangle of side length $d$ subdivided by equilateral triangles of side length one.
We view a hexagon with side lengths $a, b, c$ as the region obtained by removing triangles of side lengths $a, b$, and $c$ at the vertices
of $\mathcal{T}_d$, where $d = a + b + c$. More generally, we consider subregions $T \subset \mathcal{T} = \mathcal{T}_d$ (for some $d$) that arise from $\mathcal{T}$ by removing upward-pointing
triangles, each of them being a union of unit triangles.  We refer to the removed upward-pointing triangles as \emph{punctures}.
The punctures  may
overlap (see Figure \ref{fig:triregion-intro}). We call the resulting subregions of $\mathcal{T}$ \emph{triangular subregions}.
Such a region is said to be \emph{balanced} if it contains as many upward-pointing unit triangles as down-pointing pointing
unit triangles. For example, hexagonal subregions are balanced. Lozenge tilings of triangular subregions have been studied
in several areas. For example, they are used in statistical mechanics for modeling bonds in dimers (see, e.g., \cite{Ke})
or in statistical mechanics when studying phase transitions (see, e.g., \cite{Ci-2005}).

\begin{figure}[!ht]
    \begin{minipage}[b]{0.48\linewidth}
        \centering
        \includegraphics[scale=1]{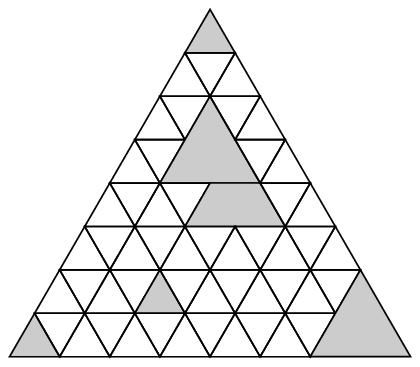}
    \end{minipage}
    \begin{minipage}[b]{0.48\linewidth}
        \centering
        \includegraphics[scale=1]{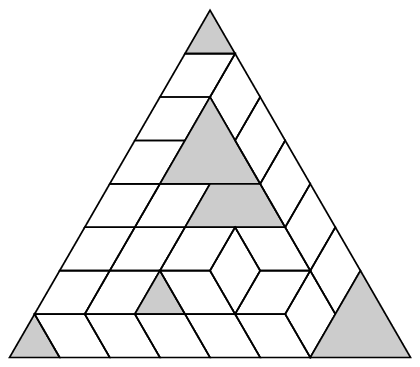}
    \end{minipage}
    \caption{A triangular region  together
        with one of its $13$ lozenge tilings.}
    \label{fig:triregion-intro}
\end{figure}

For an arbitrary triangular region, the bijection between lozenge tilings and plane partitions breaks down. However, there are still bijections between lozenge tilings, perfect matchings, and families of lattice paths. Here we establish a signed version of these bijections. In particular, we show that, for each balanced triangular
region $T$, there is a bijection between the signed perfect matchings and the signed families of non-intersecting
lattice paths. This is achieved via the links to lozenge tilings.

\begin{figure}[!ht]
   \begin{minipage}[b]{0.42\linewidth}
        \centering
        \includegraphics[scale=1]{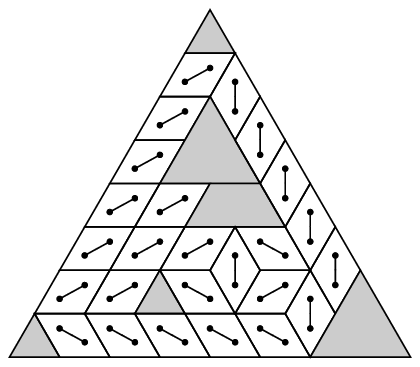}\\
        \emph{A perfect matching.}
    \end{minipage}
    \begin{minipage}[b]{0.48\linewidth}
        \centering
        \includegraphics[scale=1]{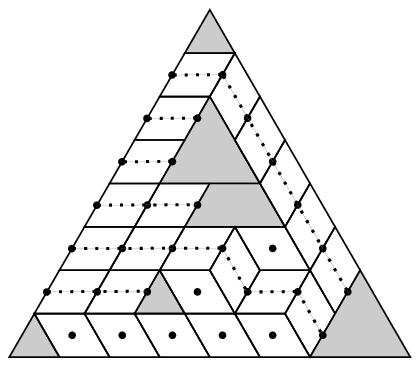}\\
        \emph{A family of non-intersecting lattice paths.}
    \end{minipage}
    \caption{Bijections to lozenge tilings.}
    \label{fig:bijections}
\end{figure}

Indeed, the perfect matchings determined by any triangular region $T$ can be enumerated by the permanent of a zero-one matrix $Z(T)$ that is the
bi-adjacency matrix of a bipartite graph. This suggests to introduce the sign of a perfect matching such that the
signed perfect matchings are enumerated by the determinant of $Z(T)$. We call this sign the \emph{perfect matching sign}
of the lozenge tiling that corresponds to the perfect matching (see Definition \ref{def:pm-sign}).

Using the theory pioneered by Gessel and Viennot \cite{GV-85}, Lindstr\"om \cite{Li},  Stembridge \cite{Stembridge},
and Krattenthaler \cite{Kr-95}, the sets of signed families of non-intersecting lattice paths in $T$
can be enumerated by the determinant of a matrix $N(T)$ whose entries are binomial coefficients. We define the sign used in this enumeration as the \emph{lattice path sign} of the corresponding lozenge tiling of the region $T$ (see Definition \ref{def:nilp-sign}).

Typically, the matrix $N(T)$ is much smaller than the
matrix $Z(T)$. However, the entries of $N(T)$ can be much bigger than one.

In order to compare enumerations of signed perfect matchings and signed lattice paths  we introduce a new combinatorial construction that we call \emph{resolution of a puncture}.
Roughly speaking, it replaces a triangular subregion with a fixed lozenge tiling by a larger triangular subregion with a
compatible lozenge tiling and one puncture less. Carefully analyzing the change of sign under resolutions of punctures and
using induction on the number of punctures of a given region, we establish that, for each balanced triangular subregion,
the perfect matching sign and the lattice path sign  are in fact equivalent, and thus (see Theorem \ref{thm:detZN})
\[
    |\det Z(T)| = |\det N(T)|.
\]
The proof also reveals instances where the absolute value of $\det Z(T)$ is equal to the permanent of $Z(T)$.
This includes hexagonal regions, for which the result is well-known.

The results of this paper will be used in forthcoming work \cite{CN-small-type} in order to study the so-called Weak Lefschetz Property \cite{HMNW} of monomial ideals. The latter is an algebraic property that has important connections to combinatorics. For example, it has been used for establishing unimodality results and the g-Theorem on the face vectors of simplicial polytopes (see, e.g., \cite{Stanley-1980, St-faces}).

The paper is organized as follows.  In Section~\ref{sec:trireg}, we introduce triangular regions and  establish a criterion for the tileability of such a region.
In Section~\ref{sec:signed}, we introduce the perfect matching and lattice path signs for a lozenge tiling.
Section~\ref{sec:resolution} contains our main results. There we introduce the method of resolving a puncture and use it to  prove the equivalence of the two signs.

\section{Tiling triangular regions with lozenges} \label{sec:trireg}

In this section, we introduce a generalization of hexagonal regions, which we call triangular regions, and we investigate
the tileability of such regions. We use monomial ideals as a bookkeeping device.

\subsection{Triangular regions and monomial ideals}\label{sub:ideal}~

Let $I$ be a monomial ideal of  a standard graded polynomial ring $R= K[x,y,z]$ over a field $K$. Thus,  $I$ has  a unique generating set of monomials with least cardinality. Its elements are called the minimal generators of $I$.
We denote the degree $d$ component of the graded ring $R/I$ by $[R/I]_d$. Note that the degree $d$ monomials  of $R$
that are \emph{not} in $I$ form a $K$-basis of $[R/I]_d$.

Let $d \geq 1$ be an integer. Consider an equilateral triangle of side length $d$ that is composed of $\binom{d}{2}$
downward-pointing ($\dntri$) and $\binom{d+1}{2}$ upward-pointing ($\uptri$) equilateral unit triangles. We label the
downward- and upward-pointing unit triangles by the monomials in $[R]_{d-2}$ and $[R]_{d-1}$, respectively, as
follows: place $x^{d-1}$ at the top, $y^{d-1}$ at the bottom-left, and $z^{d-1}$ at the bottom-right, and continue
labeling such that, for each pair of an upward- and a downward-pointing triangle that share an edge, the label of the
upward-pointing triangle is obtained from the label of the downward-pointing triangle by multiplying with a variable.
The resulting labeled triangular region is the \emph{triangular region (of $R$) in degree $d$}
and is denoted $\mathcal{T}_d$. See Figure~\ref{fig:triregion-R}(i) for an illustration.

\begin{figure}[!ht]
    \begin{minipage}[b]{0.48\linewidth}
        \centering
        \includegraphics[scale=1]{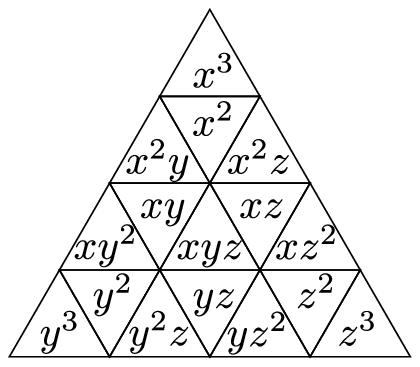}\\
        \emph{(i) $\mathcal{T}_4$}
    \end{minipage}
    \begin{minipage}[b]{0.48\linewidth}
        \centering
        \includegraphics[scale=1]{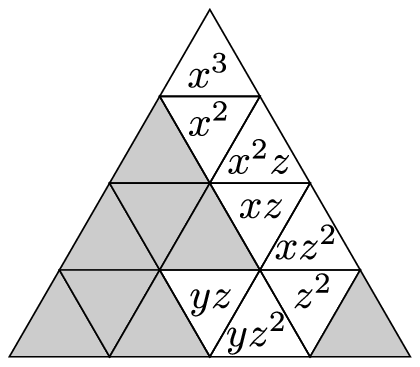}\\
        \emph{(ii) $T_4(xy, y^2, z^3)$}
    \end{minipage}
    \caption{A triangular region with respect to $R$ and with respect to $R/I$.}
    \label{fig:triregion-R}
\end{figure}

Throughout this manuscript we order the monomials of $R$ by using the \emph{graded reverse-lexico\-graphic order}, that is,
$x^a y^b z^c > x^p y^q z^r$ if either $a+b+c > p+q+r$ or $a+b+c = p+q+r$ and the \emph{last} non-zero entry in
$(a-p, b-q, c-r)$ is \emph{negative}. For example, in degree $3$,
\[
    x^3 > x^2y > xy^2 > y^3 > x^2z > xyz > y^2z > xz^2 > yz^2 > z^3.
\]
Thus in $\mathcal{T}_4$, see Figure~\ref{fig:triregion-R}(i), the upward-pointing triangles are ordered starting at
the top and moving down-left in lines parallel to the upper-left edge.

We generalise this construction to quotients by monomial ideals. Let $I$ be a monomial ideal of $R$. The
\emph{triangular region (of $R/I$) in degree $d$}, denoted by $T_d(I)$, is the part of $\mathcal{T}_d$ that is obtained
after removing the triangles labeled by monomials in $I$. Note that the labels of the downward- and
upward-pointing triangles in $T_d(I)$ form $K$-bases of $[R/I]_{d-2}$ and $[R/I]_{d-1}$, respectively. It is sometimes
more convenient to illustrate such regions with the removed triangles darkly shaded instead of being removed; both
illustration methods will be used throughout this manuscript. See Figure~\ref{fig:triregion-R}(ii) for an example.

Notice that the regions missing from $\mathcal{T}_d$ in $T_d(I)$ can be viewed as a union of (possibly overlapping)
upward-pointing triangles of various side lengths that include the upward- and downward-pointing triangles inside them.
Each of these upward-pointing triangles corresponds to a minimal generator of $I$ that has, necessarily, degree at most
$d-1$. We can alternatively construct $T_d(I)$ from $\mathcal{T}_d$ by removing, for each minimal generator $x^a y^b
z^c$ of $I$ of degree at most $d-1$, the \emph{puncture associated to $x^a y^b z^c$} which is an upward-pointing
equilateral triangle of side length $d-(a+b+c)$ located $a$ triangles from the bottom, $b$ triangles from the
upper-right edge, and $c$ triangles from the upper-left edge. See Figure~\ref{fig:triregion-punctures} for an example.
We call $d-(a+b+c)$ the \emph{side length of the puncture associated to $x^a y^b z^c$}, regardless of possible overlaps
with other punctures in $T_d (I)$.

\begin{figure}[!ht]
    \begin{minipage}[b]{0.48\linewidth}
        \centering
        \includegraphics[scale=1]{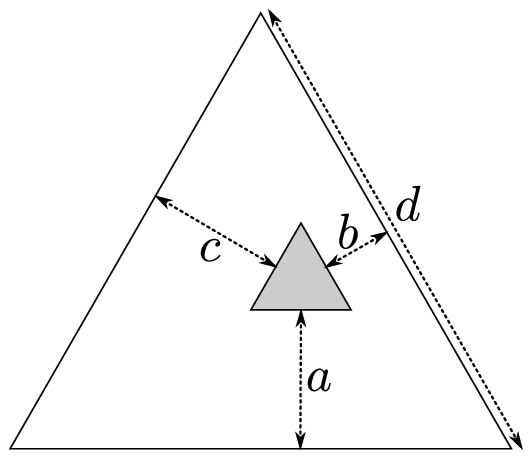}\\
        \emph{(i) $T_{d}(x^a y^b z^c)$}
    \end{minipage}
    \begin{minipage}[b]{0.48\linewidth}
        \centering
        \includegraphics[scale=1]{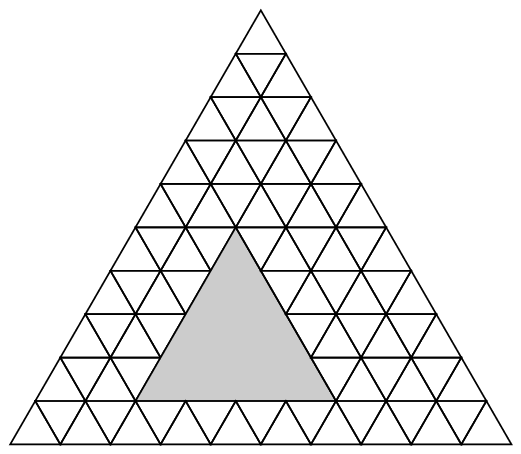}\\
        \emph{(ii) $T_{10}(xy^3z^2)$}
    \end{minipage}
    \caption{$T_d(I)$ as constructed by removing punctures.}
    \label{fig:triregion-punctures}
\end{figure}

We say that two punctures \emph{overlap} if they share at least an edge. Two punctures are said to be \emph{touching}
if they share precisely a vertex.

\subsection{Tilings with lozenges} \label{sub:tiling}

A \emph{lozenge} is a union of two unit equilateral triangles glued together along a shared edge, i.e., a rhombus with
unit side lengths and angles of $60^{\circ}$ and $120^{\circ}$. Lozenges are also called calissons and diamonds in the
literature.

Fix a positive integer $d$ and consider the triangular region $\mathcal{T}_d$ as a union of unit triangles. Thus a \emph{subregion}
$T \subset \mathcal{T}_d$ is a subset of such triangles. We retain their labels. We say that
a subregion $T$ is \emph{$\dntri$-heavy}, \emph{$\uptri$-heavy}, or \emph{balanced} if there are more downward pointing
than upward pointing triangles or less, or if their numbers are the same, respectively. A subregion is \emph{tileable}
if either it is empty or there exists a tiling of the region by lozenges such that every triangle is part of exactly one
lozenge. A tileable subregion is necessarily balanced as every unit triangle is part of exactly one lozenge.

Let $T \subset \mathcal{T}_d$ be any subregion. Given a monomial $x^a y^b z^c$ with degree less than $d$, the
\emph{monomial subregion} of $T$ associated to $x^a y^b z^c$ is the part of $T$ contained in the triangle $a$ units from
the bottom edge, $b$ units from the upper-right edge, and $c$ units from the upper-left edge. In other words, this
monomial subregion consists of the triangles that are in $T$ and the puncture associated to the monomial $x^a y^b z^c$.
See Figure~\ref{fig:triregion-subregion} for an example.

\begin{figure}[!ht]
    \includegraphics[scale=1]{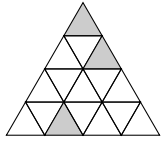}
    \caption{The monomial subregion of $T_{8}(x^7, y^7, z^6, x y^4 z^2, x^3 y z^2, x^4 y z)$
        (see Figure~\ref{fig:triregion-intro}) associated to $x y^2 z$.}
    \label{fig:triregion-subregion}
\end{figure}

Replacing a tileable monomial subregion by a puncture of the same size does not alter tileability.

\begin{lemma} \label{lem:replace-tileable}
    Let $T \subset \mathcal{T}_d$ be any subregion.  If the monomial subregion $U$ of $T$ associated to $x^a y^b z^c$ is tileable,
    then $T$ is tileable if and only if $T \setminus U$ is tileable.

    Moreover, each tiling of $T$ is obtained by combining a tiling of $T \setminus U$ and a tiling of $U$.
\end{lemma}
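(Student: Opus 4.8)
The plan is to prove both equivalences by showing that in any lozenge tiling of $T$ no lozenge can cross the boundary of the puncture. Let $W$ denote the upward-pointing triangle of side length $d-(a+b+c)$ in whose location the monomial subregion sits, so that $U$ consists of the unit triangles of $T$ lying in $W$; write $\partial W$ for its boundary. First I would record the key geometric observation: along each of the three sides of $W$, the unit triangle lying immediately inside $W$ is upward-pointing. This is immediate from inspecting the three sides (for the horizontal bottom side the inside-adjacent triangles are the $\uptri$ with base on that side, and the other two sides follow by the threefold rotational symmetry of $W$, which permutes the upward-pointing unit triangles among themselves). Equivalently, every $\dntri$ contained in $W$ has all three of its edges interior to $W$; in particular no $\dntri$ of $W$ shares an edge with $\mathcal{T}_d \setminus W$.

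Next I would use this to control an arbitrary tiling of $T$. Since the two unit triangles forming a lozenge are adjacent across their common edge, they lie on opposite sides of $\partial W$ precisely when that common edge lies on $\partial W$; combined with the observation above, every lozenge meeting both $W$ and its complement must consist of an $\uptri$ inside $W$ together with a $\dntri$ outside $W$. Consequently each $\dntri$ of $U$ is matched by a lozenge whose other half is one of its neighbouring upward-pointing triangles, all of which lie in $W$ and hence, being covered, in $T$ and so in $U$; thus every lozenge covering a $\dntri$ of $U$ lies entirely inside $U$. Writing $n_{\dntri}$ and $n_{\uptri}$ for the numbers of downward- and upward-pointing triangles of $U$, these interior lozenges account for all $n_{\dntri}$ down-triangles and for exactly $n_{\dntri}$ up-triangles of $U$. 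Because $U$ is tileable it is balanced, so $n_{\uptri}=n_{\dntri}$, and therefore every $\uptri$ of $U$ is covered by a lozenge inside $U$ as well. Hence no lozenge crosses $\partial W$: the lozenges contained in $W$ form a tiling of $U$, and the remaining lozenges form a tiling of $T\setminus U$.

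This yields at once the forward implication together with the ``moreover'' statement, since restricting any tiling of $T$ to $W$ and to its complement exhibits it as the union of a tiling of $U$ and a tiling of $T\setminus U$; in particular $T\setminus U$ is tileable. For the converse, $T$ is the disjoint union of the triangle sets $U$ and $T\setminus U$, so given a tiling of $U$ (which exists by hypothesis) and a tiling of $T\setminus U$, each lozenge of the former lies inside $W$ and each lozenge of the latter lies outside $W$, and their union is a tiling of $T$. The degenerate cases in which $U$ or $T\setminus U$ is empty are covered by the convention that the empty region is tileable.

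I expect the only genuine subtlety to be the geometric observation in the first paragraph: it is the upward orientation of the puncture that forces the inside-adjacent triangles along $\partial W$ to be upward-pointing, equivalently that keeps every $\dntri$ of $W$ away from $\partial W$. Once this is established, the balance of the tileable region $U$ does all the remaining work, and the counting that rules out boundary-crossing lozenges is routine.
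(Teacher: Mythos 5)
Your proof is correct and follows essentially the same route as the paper's: both rest on the observation that a downward-pointing triangle of $U$ has all its neighbours inside $U$, so any boundary-crossing lozenge would have to use an upward-pointing triangle of $U$, which the balancedness of the tileable region $U$ rules out by counting. The only difference is presentational—you justify the geometric observation explicitly and phrase the counting directly, where the paper states the observation without proof and argues by contradiction.
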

\begin{proof}
    Suppose $T$ is tileable, and let $\tau$ be a tiling of $T$.  If a tile in $\tau$ contains a downward-pointing triangle of $U$, then the
    upward-pointing triangle of this tile also is in $U$. Hence, if any lozenge in $\tau$ contains exactly one triangle of $U$, then
    it must be an upward-pointing triangle. Since $U$ is balanced, this would leave $U$ with a downward-pointing triangle that is
    not part of any tile, a contradiction. It follows that $\tau$ induces a tiling of $U$, and thus $T \setminus U$ is tileable.

    Conversely, if $T \setminus U$ is tileable, then a tiling of $T \setminus U$ and a tiling of $U$ combine to a tiling of $T$.
\end{proof}

Let $U \subset \mathcal{T}_d$ be a monomial subregion, and let $T, T' \subset \mathcal{T}_d$ be any subregions such that
$T \setminus U = T' \setminus U$. If $T \cap U$ and $T' \cap U$ are both tileable, then $T$ is tileable if and only if
$T'$ is, by Lemma \ref{lem:replace-tileable}. In other words, replacing a tileable monomial subregion of a triangular
region by a tileable monomial subregion of the same size does not affect tileability.

Using this observation, we find a tileability criterion of triangular regions
associated to monomial ideals. If it is satisfied the argument below constructs a tiling.

\begin{theorem} \label{thm:tileable}
    Let $T = T_d(I)$ be a balanced triangular region, where $I \subset R$ is any monomial ideal.  Then $T$ is tileable if and only if
    $T$ has no $\dntri$-heavy monomial subregions.
\end{theorem}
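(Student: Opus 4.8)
The plan is to prove the two implications separately; the forward implication is straightforward and is essentially the argument already used for Lemma~\ref{lem:replace-tileable}, while the reverse implication carries the real content.

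For necessity, suppose $\tau$ is a tiling of $T$ and let $U = T \cap P$ be the monomial subregion associated to a monomial $x^a y^b z^c$, where $P$ is the corresponding (upward-pointing) puncture. The geometric observation is that every downward-pointing unit triangle lying in $P$ has all three of its lattice neighbors inside $P$, because the three sides of the triangle $P$ consist entirely of edges of upward-pointing unit triangles. Hence, if a downward-pointing triangle of $U$ belongs to a lozenge of $\tau$, the upward-pointing triangle of that lozenge is a neighbor lying in $P$ and in $T$, so it lies in $U$. Thus $\tau$ injects the downward-pointing triangles of $U$ into the upward-pointing triangles of $U$, giving $\#\uptri(U) \ge \#\dntri(U)$; that is, $U$ is not $\dntri$-heavy.

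For sufficiency I would recast tileability as a bipartite matching problem: form the bipartite graph whose two vertex classes are the downward- and upward-pointing unit triangles of $T$, with an edge whenever two such triangles share an edge in $\mathcal{T}_d$. A lozenge tiling of $T$ is exactly a perfect matching of this graph, and since $T$ is balanced such a matching exists if and only if Hall's condition holds for the downward-pointing triangles; moreover the standard augmenting-path procedure produces the matching explicitly, which is what it means for the criterion to \emph{construct} a tiling. It therefore suffices to prove that the hypothesis forces Hall's condition or, contrapositively, that whenever some set $S$ of downward-pointing triangles has $|N(S)| < |S|$, the region $T$ has a $\dntri$-heavy monomial subregion.

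The main obstacle is precisely this last step: converting a Hall-deficient set into a $\dntri$-heavy monomial subregion. My approach would be to take $S$ to be an inclusion-extremal deficient set and to analyze its neighborhood $N(S)$ using the labelling and the grevlex order; the goal is to show that $S$ together with $N(S)$ is confined to a single puncture $P_m$ and in fact fills out the downward part of the monomial subregion $U_m = T \cap P_m$, so that $\#\dntri(U_m) \ge |S| > |N(S)| \ge \#\uptri(U_m)$. Choosing $P_m$ canonically --- I expect the smallest puncture containing $S$, with extremality of $S$ ensuring that no upward-pointing triangle of $U_m$ escapes being counted by $N(S)$ --- is where the combinatorial work concentrates, and I anticipate this to be the genuinely delicate point, since the simplest candidate splittings (for instance peeling off a \emph{proper balanced} monomial subregion) need not exist, as small hexagons already show. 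Once such an extremal monomial subregion is understood, however, Lemma~\ref{lem:replace-tileable} lets one peel off tileable monomial subregions successively, turning the affirmative case into a recursive construction of the tiling.
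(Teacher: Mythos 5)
Your necessity argument is correct, and it is essentially the paper's own (the same local observation that underlies Lemma~\ref{lem:replace-tileable}): every downward-pointing unit triangle inside the triangle defining a monomial subregion has all three of its neighbors inside that triangle, so a tiling injects $\dntri(U)$ into $\uptri(U)$. The gap is in the sufficiency direction, which is where all of the content of the theorem lies. Reducing tileability of a balanced region to Hall's condition is correct but costs nothing; the substance is the implication ``a Hall-deficient set $S$ of downward-pointing triangles produces a $\dntri$-heavy \emph{monomial} subregion,'' and this you do not prove --- you only name a candidate strategy and yourself flag it as ``the genuinely delicate point.'' Moreover, the strategy as sketched is problematic in both of its natural readings. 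If ``inclusion-extremal'' means maximal, the claim fails outright: when $T$ contains two far-apart deficient areas, a maximal deficient set contains downward triangles from both, and the smallest monomial subregion containing it can be all of $T$, which is balanced. If it means minimal, then $S \cup N(S)$ is at least connected, but the smallest monomial subregion $U_m \supseteq S \cup N(S)$ can contain upward-pointing triangles of $T$ that are adjacent to no element of $S$ (the enclosing triangle of a bent or diagonal configuration sweeps in area far from $S$), and each such triangle threatens the inequality $\#\dntri(U_m) > \#\uptri(U_m)$ unless it is compensated by extra downward triangles or shown to be covered by punctures; nothing in the proposal addresses this. Since deficiency is created by punctures in possibly touching or overlapping patterns, controlling what else the enclosing subregion picks up is essentially equivalent to the theorem itself, so the proposal reduces the hard statement to an unproved statement of comparable difficulty.

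For comparison, the paper never passes through Hall's theorem. It first invokes Lemma~\ref{lem:replace-tileable} to assume $T$ has no non-trivial tileable monomial subregions --- which already forces all punctures to be pairwise non-overlapping and non-touching --- and then argues by induction on $d$: split $T$ into the bottom row $L$ and the monomial subregion $U$ associated to $x$; show $L$ must meet a puncture; place a forced up-down lozenge immediately to the right of each puncture meeting $L$ except the right-most one; and verify that the truncated region $U'$ is balanced and inherits the ``no $\dntri$-heavy monomial subregion'' hypothesis, so that induction produces a tiling of $U'$ which patches with the forced lozenges and the uniquely tileable $L'$. This is a direct construction of the tiling (it is also what makes the algorithm of Remark~\ref{rem:complexity} work), whereas your plan, even if the deficiency-to-subregion conversion were carried out, would produce the tiling only through augmenting paths. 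As written, your text is a program for a proof, not a proof: the step that converts a violation of Hall's condition into a $\dntri$-heavy monomial subregion must be supplied in full before the sufficiency direction can be considered established.
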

\begin{proof}
    Suppose $T$ contains a $\dntri$-heavy monomial subregion $U$.  That is, $U$ has more downward-pointing triangles than upward-pointing
    triangles.  Since the only triangles of $T \setminus U$ that share an edge with  $U$ are downward-pointing triangles, it is impossible to cover every
    downward-pointing triangle of $U$ with a lozenge.  Thus, $T$ is non-tileable.

    Conversely,  suppose $T$ has no $\dntri$-heavy monomial subregions.  In order to show that $T$ is tileable, we may also assume
    that $T$ has no non-trivial tileable monomial subregions by Lemma~\ref{lem:replace-tileable}.

    Consider any pair of touching or overlapping punctures in $\mathcal{T}_d$. The smallest monomial subregion $U$ containing both punctures
    is tileable. (In fact, such a monomial region is uniquely tileable by lozenges.)
    If further triangles stemming from other punctures of $T$ have been removed from $U$, then the resulting region
    $T \cap U$ becomes $\dntri$-heavy or empty. Thus, our assumptions imply that $T$ has no overlapping and no touching
    punctures.

    Now we proceed by induction on $d$. If $d \leq 2$, then $T$ is empty or consists of one lozenge.  Thus, it is tileable.
    Let $d \geq 3$, and let $U$ be the monomial subregion of $T$ associated to $x$, i.e., $U$ consists of  the upper $d-1$
    rows of $T$.  Let $L$ be the bottom row of $T$.  If $L$ does not contain part of a puncture of $T$, then $L$ is
    $\uptri$-heavy forcing $U$ to be a $\dntri$-heavy monomial subregion, contradicting an assumption on $T$.  Hence, $L$
    must contain part of at least one puncture of $T$.  See Figure~\ref{fig:thm-tileable}(i).

    \begin{figure}[!ht]
        \begin{minipage}[b]{0.48\linewidth}
            \centering
            \includegraphics[scale=1]{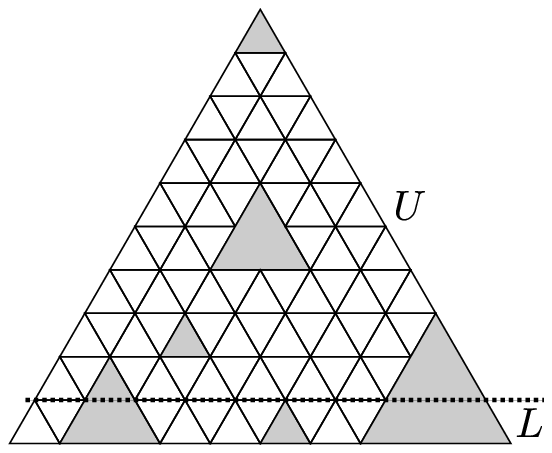}\\
            \emph{(i) The region $T$ split in to $U$ and $L$.}
        \end{minipage}
        \begin{minipage}[b]{0.48\linewidth}
            \centering
            \includegraphics[scale=1]{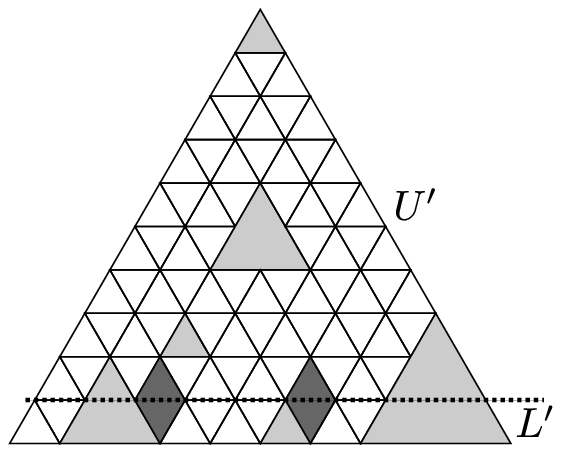}\\
            \emph{(ii) Creating $U'$ and $L'$.}
        \end{minipage}
        \caption{Illustrations for the proof of Theorem~\ref{thm:tileable}.}
        \label{fig:thm-tileable}
    \end{figure}

    Place an up-down lozenge in $T$ just to the right of each puncture along the bottom row \emph{except} the farthest right puncture.
    Notice that putting in all these tiles is  possible since punctures are non-overlapping and non-touching.  Let $U' \subset U$
    and $L' \subset L$ be the subregions that are obtained by removing the relevant upward-pointing and  downward-pointing triangles
    of the added lozenges from $U$ and $L$, respectively.  See Figure~\ref{fig:thm-tileable}(ii). Notice, $L'$ is uniquely tileable.

    As $T$ and $L'$ are balanced,  so is $U'$.   Assume $U'$ contains a monomial subregion $V'$ that is $\dntri$-heavy.
    Then $V' \neq U'$, and hence $V'$ fits into a triangle of side length $d-2$. Furthermore, the assumption on $T$ implies
    that $V'$ is not a monomial subregion of $U$. In particular, $V'$ must be located at the bottom of $U'$. Let $\tilde{V}$
    be the smallest monomial subregion of $U$ that contains $V'$. It is obtained from $V'$ by adding suitable upward-pointing
    triangles that are parts of the added lozenges.  Expand $\tilde{V}$ down one row to a monomial subregion $V$ of $T$.
    Thus, $V$ fits into a triangle of side length $d-1$ and is not $\dntri$-heavy. If $V$ is balanced, then, by induction,
    $V$ is tileable.  However, we assumed $T$ contains no such non-trivial regions.  Hence, $V$ is $\uptri$-heavy.
    Observe now that the region $V \cap L'$ is either balanced or has exactly one more upward-pointing triangle than
    downward-pointing triangles.  Since $V'$ is obtained from $V$ by removing $V \cap L$ and some of the added lozenges,
    it follows that $V'$ cannot be $\dntri$-heavy, a contradiction.

    Therefore, we have shown that each monomial subregion of $U'$ is not $\dntri$-heavy. By induction on $d$, we conclude
    that $U'$ is tileable. Using the lozenges already placed, along with the tiling of $L'$, we obtain a tiling of $T$.
\end{proof}

\begin{remark} \label{rem:complexity}
    The preceding proof yields a recursive construction of a canonical tiling of the triangular region.  In fact, the tiling can
    be seen as minimal, in the sense of Subsection~\ref{sub:nilp}.  Moreover, the theorem yields an exponential (in the number of
    punctures) algorithm to determine the tileability of a region.

    Thurston~\cite{Th} gave a linear (in the number of triangles) algorithm to determine the tileability of a \emph{simply-connected region},
    i.e., a region with a polygonal boundary.  Thurston's algorithm also yields a minimal canonical tiling.
\end{remark}

\section{Signed lozenge tilings} \label{sec:signed}

In Theorem~\ref{thm:tileable}, we established a tileability criterion for a triangular region.   Now we want to
\emph{enumerate} the lozenge tilings of a tileable triangular region $T_d(I)$. In fact, we introduce two ways for assigning a sign to a
lozenge tiling here and then compare the resulting enumerations in the next section.

In order to derive the (unsigned) enumeration, we consider the enumeration of perfect matchings of an associated bipartite
graph.  The permanent of its bi-adjacency matrix, a zero-one matrix,  yields
the desired enumeration.  We define a first sign of a lozenge tiling in such a way that the determinant of the bi-adjacency matrix gives a \emph{signed} enumeration of the perfect matchings of the graph and hence of lozenge tilings of $T_d(I)$.

We also introduce a second sign of a lozenge tiling by considering an enumeration of families of non-intersecting
lattice paths on an associated finite sub-lattice inside $T_d(I)$. This is motivated by the Lindstr\"om-Gessel-Viennot theory \cite{Li}, \cite{GV}. Using the sub-lattice, we generate a matrix whose entries are  binomial coefficients and whose determinant gives a signed enumeration
of families of non-intersecting lattice paths inside $T_d(I)$, hence of lozenge tilings.  The two signed enumerations appear to be different, but we show
that they are indeed the same, up to sign, in the following section.

~\subsection{Perfect matchings}\label{sub:pm}~\par

A subregion $T (G) \subset \mathcal{T}_d$ can be associated to a bipartite planar graph $G$ that is an induced subgraph
of the honeycomb graph. Lozenge tilings of $T(G)$ can be then associated to perfect matchings on $G$. The connection was
used by Kuperberg in~\cite{Kup}, the earliest citation known to the authors, to study symmetries on plane partitions.
Note that $T(G)$ is often called the \emph{dual graph} of $G$ in the literature (e.g., \cite{Ci-1997}, \cite{Ci-2005},
and \cite{Ei}). Here we begin with a subregion $T$  and then construct a suitable graph $G$.

Let $T \subset \mathcal{T}_d$ be any subregion. As above, we consider $T$ as a union of unit triangles. We associate to
$T$ a bipartite graph. First, place a vertex at the center of each triangle. Let $B$ be the set of centers of the
downward-pointing triangles, and let $W$ be the set of centers of the upward-pointing triangles. Consider both sets
ordered by the reverse-lexicographic ordering applied to the monomial labels of the corresponding triangles (see
Section~\ref{sub:ideal}). The \emph{bipartite graph associated to $T$}
is the bipartite graph $G(T)$ on the vertex set $B \cup W$ that has an edge between vertices $B_i \in B$ and $W_j \in W$
if the corresponding upward- and downward-pointing triangle share are edge. In other words, edges of $G(T)$ connect vertices
of adjacent triangles. See Figure~\ref{fig:build-pm}(i).

\begin{figure}[!ht]
    \begin{minipage}[b]{0.32\linewidth}
        \centering
        \includegraphics[scale=1]{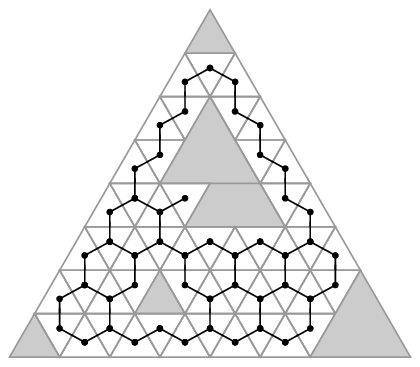}\\
        \emph{(i) The graph $G(T)$.}
    \end{minipage}
    \begin{minipage}[b]{0.32\linewidth}
        \centering
        \includegraphics[scale=1]{figs/build-pm-2}\\
        \emph{(ii) Selected covered edges.}
    \end{minipage}
    \begin{minipage}[b]{0.32\linewidth}
        \centering
        \includegraphics[scale=1]{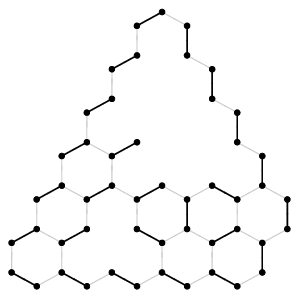}\\
        \emph{(iii) The perfect matching.}
    \end{minipage}
    \caption{The perfect
        matching  of the bipartite graph $G(T)$ associated to the tiling of $T$ in Figure~\ref{fig:triregion-intro}.}
    \label{fig:build-pm}
\end{figure}

Using the above ordering of the vertices, we define the \emph{bi-adjacency matrix}
of $T$ as the bi-adjacency matrix $Z(T) := Z(G(T))$ of the graph $G(T)$. It is the zero-one matrix $Z(T)$ of
size $\# B \times \# W$ with entries $Z(T)_{(i,j)}$ defined by
\begin{equation*}
    Z(T)_{(i,j)} =
    \begin{cases}
        1 & \text{if $(B_i, W_j)$ is an edge of $G(T)$ } \\
        0 & \text{otherwise.}
    \end{cases}
\end{equation*}

\begin{remark} \label{rem:Z-non-square}
    Note that $Z(T)$ is a square matrix if and only if the region $T$ is balanced.  Observe also that the construction
    of $G(T)$ and $Z(T)$ do not require any restrictions on $T$.  In particular, $T$ need not be
    balanced,  and so $Z(T)$ need not be square.
\end{remark}

A \emph{perfect matching of a graph $G$}
is a set of pairwise non-adjacent edges of $G$ such that each vertex is matched. There is a well-known bijection between lozenge
tilings of a balanced subregion $T$ and perfect matchings of $G(T)$. A lozenge tiling $\tau$ is transformed in to a perfect
matching $\pi$ by overlaying the triangular region $T$ on the bipartite graph $G(T)$ and selecting the edges of the graph that
the lozenges of $\tau$ cover. See Figures~\ref{fig:build-pm}(ii) and~(iii) for the overlayed image and the perfect matching by
itself, respectively.

\begin{remark}
    The graph $G(T)$ is a ``honeycomb graph,'' a type of graph that has been studied, especially for its perfect matchings.
    \begin{enumerate}
        \item In particular, honeycomb graphs are investigated for their connections to physics.  Honeycomb graphs model the bonds
            in dimers (polymers with only two structural units), and perfect matchings correspond to so-called \emph{dimer coverings}.
            Kenyon~\cite{Ke} gave a modern recount of explorations on dimer models, including random dimer coverings and their
            limiting shapes.  See the recent memoir~\cite{Ci-2005} of Ciucu for further results in this direction.
        \item Kasteleyn~\cite{Ka} provided, in 1967, a general method for computing the number of perfect matchings of a planar graph by means
            of a determinant.  In the following observation, we compute the number of perfect matchings on $G(T)$ by means of a permanent.
    \end{enumerate}
\end{remark}

Recall that the \emph{permanent} of an $n \times n$ matrix $M = (M_{(i, j)})$ is given by
\[
    \per{M} := \sum_{\sigma \in \PS_n} \prod_{i=1}^{n} M_{(i, \sigma(i))}.
\]

\begin{proposition} \label{pro:per-enum}
    Let $T \subset \mathcal{T}_d$ be a non-empty balanced subregion.  Then the lozenge tilings of $T$ and the perfect matchings of $G(T)$ are both
    enumerated by $\per{Z(T)}$.
\end{proposition}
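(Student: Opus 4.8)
The plan is to prove the proposition in two independent steps: first, that $\per Z(T)$ enumerates the perfect matchings of $G(T)$; and second, that the perfect matchings of $G(T)$ are in bijection with the lozenge tilings of $T$. Since $T$ is balanced, Remark~\ref{rem:Z-non-square} guarantees that $Z(T)$ is a square matrix of size $n := \# B = \# W$, so the permanent is defined and the expansion
\[
    \per Z(T) = \sum_{\sigma \in \PS_n} \prod_{i=1}^{n} Z(T)_{(i, \sigma(i))}
\]
makes sense. I would begin from this formula.

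For the first step, the key observation is that $Z(T)$ is a zero-one matrix, so each summand $\prod_{i=1}^n Z(T)_{(i,\sigma(i))}$ equals either $0$ or $1$, and it equals $1$ precisely when $(B_i, W_{\sigma(i)})$ is an edge of $G(T)$ for every $i$. A permutation $\sigma$ with this property determines the edge set $\{(B_i, W_{\sigma(i)}) : 1 \le i \le n\}$, which covers each vertex of $B$ and each vertex of $W$ exactly once (because $\sigma$ is a bijection) and hence is a perfect matching of $G(T)$. Conversely, a perfect matching assigns to each $B_i$ a unique neighbor $W_j$, and the resulting map $i \mapsto j$ is a permutation $\sigma \in \PS_n$ whose associated summand is $1$. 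These two assignments are mutually inverse, so they give a bijection between the permutations contributing $1$ to the sum and the perfect matchings of $G(T)$. Therefore $\per Z(T)$ equals the number of perfect matchings of $G(T)$.

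For the second step, I would make precise the correspondence already sketched in the text preceding the proposition. Each lozenge of a tiling $\tau$ consists of one upward- and one downward-pointing triangle sharing an edge, so it selects exactly the edge of $G(T)$ joining their centers; since $\tau$ covers every triangle exactly once, the selected edges meet each vertex of $B \cup W$ exactly once and thus form a perfect matching $\pi$ of $G(T)$. Conversely, given a perfect matching $\pi$, each of its edges prescribes a pair of adjacent triangles to be glued into a lozenge, and as $\pi$ covers every vertex exactly once these lozenges partition all of $T$, producing a tiling. These constructions are inverse to one another, yielding the desired bijection. Combining the two steps shows that lozenge tilings of $T$ and perfect matchings of $G(T)$ are equinumerous and both counted by $\per Z(T)$.

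The main obstacle is not the permanent computation, which is essentially formal once the zero-one structure is exploited, but rather the careful verification of the tiling–matching bijection: one must confirm that the edge set selected by a tiling genuinely covers each vertex exactly once (using that lozenges tile $T$ with no overlaps) and, in the reverse direction, that the lozenges prescribed by a matching are pairwise non-overlapping and exhaust $T$. Both checks follow directly from the defining property of a perfect matching together with the fact that adjacent triangles in $\mathcal{T}_d$ are exactly the upward/downward pairs sharing an edge, so the bijection is well defined in both directions.
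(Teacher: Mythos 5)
Your proposal is correct and follows essentially the same route as the paper: the paper's proof likewise observes that, since $T$ is balanced, $Z(T)$ is a square zero-one matrix whose non-zero permanent summands correspond bijectively to perfect matchings of $G(T)$, and then invokes the tiling--matching bijection (stated in the text just before the proposition) to transfer the count to lozenge tilings. Your write-up merely makes explicit the verification of that bijection, which the paper treats as well known.
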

\begin{proof}
    As $T$ is balanced, $Z(T)$ is a square zero-one matrix. Each non-zero summand of $\per{Z(T)}$ corresponds to a
    perfect matching, as it corresponds to a bijection between the two colour classes $B$ and $W$ of $G(T)$ (determined
    by the downward- and upward-pointing triangles of $T$).  Hence, $\per{Z(T)}$ enumerates the perfect matchings of
    $G(T)$, and thus the tilings of $T$.
\end{proof}

Recall that the \emph{determinant} of an $n \times n$ matrix $M$ is given by
\[
    \det{M} := \sum_{\sigma \in \PS_n} \sgn{\sigma} \prod_{i=1}^{n}  M_{(i, \sigma(i))},
\]
where $\sgn{\sigma}$ is the signature (or sign) of the permutation $\sigma$.
We take the
convention that the permanent and determinant of a $0 \times 0$ matrix its one.

By the proof of Proposition~\ref{pro:per-enum}, each lozenge tiling $\tau$ corresponds to a perfect matching $\pi$ of $G(T)$, that is, a bijection
$\pi: B \to W$. Considering $\pi$ as a permutation on $\#\uptri(T) = \#\dntri (T)$ letters, it is natural to assign a sign to each lozenge tiling
using the signature of the permutation $\pi$.

\begin{definition} \label{def:pm-sign}
    Let $T \subset \mathcal{T}_d$ be a non-empty balanced subregion. Then we define the \emph{perfect matching sign}
    of a lozenge tiling $\tau$ of $T$ as $\msgn{\tau} := \sgn{\pi}$, where $\pi \in \PS_{\#\uptri(T)}$ is the perfect
    matching determined by $\tau$.
\end{definition}

It follows that the determinant of $Z(T)$ gives an enumeration of the \emph{perfect matching signed lozenge tilings} of $T$.

\begin{theorem} \label{thm:pm-matrix}
     Let $T \subset \mathcal{T}_d$ be a non-empty balanced subregion. Then the perfect matching signed lozenge tilings of $T$
    are enumerated by $\det{Z(T)}$, that is,
    \[
        \sum_{\tau \text{tiling of } T}  \msgn{\tau} = \det Z(T).
    \]
\end{theorem}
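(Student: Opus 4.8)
The plan is to expand $\det Z(T)$ via the Leibniz formula and match its nonzero terms with the signed lozenge tilings, mirroring the argument for the permanent in Proposition~\ref{pro:per-enum} but now keeping track of signs. Since $T$ is balanced, $Z(T)$ is a square matrix, say of size $n \times n$ with $n = \#\uptri(T) = \#\dntri(T)$, and the Leibniz expansion reads
\[
    \det Z(T) = \sum_{\sigma \in \PS_n} \sgn{\sigma} \prod_{i=1}^n Z(T)_{(i,\sigma(i))}.
\]

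First I would observe that, because $Z(T)$ is a zero-one matrix, each product $\prod_{i=1}^n Z(T)_{(i,\sigma(i))}$ is either $0$ or $1$; it equals $1$ precisely when $(B_i, W_{\sigma(i)})$ is an edge of $G(T)$ for every $i$. Such a $\sigma$ is exactly a bijection $B \to W$ all of whose pairs are edges, i.e., a perfect matching $\pi$ of $G(T)$; conversely, under the fixed reverse-lexicographic orderings of $B$ and $W$ used to build $Z(T)$, each perfect matching is encoded by a unique permutation $\sigma \in \PS_n$. Thus only the perfect matchings contribute nonzero summands, and each contributes exactly $\sgn{\sigma}$.

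Next I would invoke the bijection between lozenge tilings $\tau$ of $T$ and perfect matchings $\pi$ of $G(T)$ recalled before Definition~\ref{def:pm-sign}, together with the definition $\msgn{\tau} = \sgn{\pi}$. Since the permutation $\sigma$ selected by a perfect matching is precisely the permutation $\pi \in \PS_{\#\uptri(T)}$ determining $\msgn{\tau}$, substituting yields
\[
    \det Z(T) = \sum_{\sigma \text{ a perfect matching}} \sgn{\sigma} = \sum_{\pi} \sgn{\pi} = \sum_{\tau \text{ tiling of } T} \msgn{\tau},
\]
as claimed.

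There is no serious obstacle here; the only point requiring care is the bookkeeping that identifies the Leibniz-index permutation $\sigma$ with the matching permutation $\pi$. This hinges on the fact that $Z(T)$ was defined using the same ordered vertex sets $B$ and $W$ that are used to view a matching $\pi \colon B \to W$ as an element of $\PS_n$, so the two notions of sign coincide term by term.
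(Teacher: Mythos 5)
Your proposal is correct and matches the paper's (implicit) argument: the paper treats Theorem~\ref{thm:pm-matrix} as an immediate consequence of the correspondence between nonzero Leibniz/permanent terms and perfect matchings established in the proof of Proposition~\ref{pro:per-enum}, combined with Definition~\ref{def:pm-sign}, which is exactly the term-by-term identification you spell out. Your explicit attention to the shared orderings of $B$ and $W$ that make the Leibniz index $\sigma$ coincide with the matching permutation $\pi$ is the right bookkeeping point and is consistent with how $Z(T)$ and $\msgn$ are defined in the paper.
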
~

\begin{example} \label{exa:Z-matrix}
    Consider the triangular region $T = T_6(x^3, y^4, z^5)$, as seen in the first picture of Figure~\ref{fig:three-rotations} below.
    Then $Z(T)$ is the $11 \times 11$ matrix
    \[
        Z(T) =
        \left[
            \begin{array}{ccccccccccc}
                1&1&0&0&0&0&0&0&0&0&0\\
                0&1&1&0&0&0&0&0&0&0&0\\
                0&0&1&1&0&0&0&0&0&0&0\\
                1&0&0&0&1&0&0&0&0&0&0\\
                0&1&0&0&1&1&0&0&0&0&0\\
                0&0&1&0&0&1&1&0&0&0&0\\
                0&0&0&1&0&0&1&1&0&0&0\\
                0&0&0&0&1&0&0&0&1&0&0\\
                0&0&0&0&0&1&0&0&1&1&0\\
                0&0&0&0&0&0&1&0&0&1&1\\
                0&0&0&0&0&0&0&1&0&0&1
            \end{array}
        \right].
    \]
    We note that $\per Z(T) = \det{Z(T)} = 10$. Thus, $T$ has exactly $10$ lozenge tilings, all of which have the same sign.
    We derive a theoretical explanation for this fact in the following section.
\end{example}~

\subsection{Families of non-intersecting lattice paths}\label{sub:nilp}~\par

We follow~\cite[Section~5]{CEKZ} (similarly,~\cite[Section~2]{Fi}) in order to associate to a subregion $T \subset \mathcal{T}_d$ a
finite set  $L(T)$ that can be identified with a subset of  the lattice $\ZZ^2$.  Abusing notation, we refer to $L(T)$ as a
sub-lattice of $\ZZ^2$. We then translate lozenge tilings of $T$ into families of non-intersecting lattice paths on $L(T)$.

We first construct $L(T)$ from $T$.  Place a vertex at the midpoint of the edge of each triangle of $T$  that is parallel to the upper-left
boundary of the triangle $\mathcal{T}_d$.  These vertices form $L(T)$. We will consider paths in $L(T)$. There we think of rightward motion parallel to the
bottom edge of $\mathcal{T}_d$ as ``horizontal'' and downward motion parallel to the upper-right edge of $\mathcal{T}_d$ as ``vertical'' motion.  If one simply orthogonalises
$L(T)$ with respect to the described ``horizontal'' and ``vertical'' motions, then we can consider $L(T)$ as a finite sub-lattice of $\ZZ^2$.
As we can translate $L(T)$ in $\ZZ^2$ and not change its properties, we may assume that the vertex associated to the
lower-left triangle of $\mathcal{T}_d$ is the origin.  Notice that each vertex of $L(T)$  is on the upper-left edge of an upward-pointing triangle of $\mathcal{T}_d$ (even if
this triangle is not present in $T$). We use the monomial label of this upward-pointing triangle to specify a vertex of $L(T)$. Under this identification the mentioned
orthogonalization of $L(T)$ moves the vertex associated to the monomial $x^a y^b z^{d-1-(a+b)}$ in $L(T)$  to the point $(d-1-b, a)$ in $\ZZ^2$.

We next single out special vertices of $L(T)$. We label the vertices of $L(T)$ that are only on upward-pointing triangles in $T$, from smallest to largest
in the reverse-lexicographic order, as $A_1, \ldots, A_m$.  Similarly, we label the vertices of $L(T)$ that are only on downward-pointing triangles in $T$,
again from smallest to largest in the reverse-lexicographic order, as $E_1, \ldots, E_n$.  See Figure~\ref{fig:build-nilp}(i).  We note that there are an equal
number of vertices  $A_1, \ldots, A_m$ and $E_1, \ldots, E_n$ if and only if the region $T$ is balanced.  This follows from the fact  these vertices are
precisely the vertices of $L(T)$ that are in exactly one unit triangle of $T$.

A \emph{lattice path} in a lattice $L \subset \ZZ^2$ is a finite sequence of vertices of $L$  so that all single steps move either to the right or down.
Given any vertices $A, E \in \ZZ^2$, the number of lattice paths in $\ZZ^2$ from $A$ to $E$ is a binomial coefficient.  In fact, if $A$ and $E$ have
coordinates $(u,v), (x,y) \in \ZZ^2$ as above, there are $\binom{x-u+v-y}{x-u}$ lattice paths from $A$ to $E$ as each path has $x-u + v-y$ steps
and $x-u \geq 0$ of these must be horizontal steps.

Using the above identification of $L(T)$ as a sub-lattice of $\ZZ^2$, a \emph{lattice path} in $L(T)$ is a finite sequence of vertices of $L(T)$
so that all single steps move either to the East or to the Southeast. The \emph{lattice path matrix}
of $T$ is the $m \times n$ matrix  $N(T)$ with entries $N(T)_{(i,j)}$ defined by
\[
    N(T)_{(i,j)} = \# \text{lattice paths in $\ZZ^2$ from $A_i$ to $E_j$}.
\]
Thus,  the entries of $N(T)$ are binomial coefficients.

Next we consider several lattice paths simultaneously. A \emph{family of non-intersecting lattice paths}
is a finite collection of lattice paths such that no two lattice paths have any points in common.  We call a family of non-intersecting
lattice paths \emph{minimal}
if every path takes vertical steps before it takes horizontal steps, whenever possible.  That is, every time a horizontal step is followed by a vertical step, then replacing
these with a vertical step followed by a horizontal step would cause paths in the family to intersect.

Assume now that the subregion $T$ is balanced, so $m = n$. Let $\Lambda$ be a family of $m$ non-intersecting lattice paths in $L(T)$ from
$A_1, \ldots, A_m$ to $E_1, \ldots, E_m$. Then $\Lambda$ determines a permutation $\lambda \in \PS_m$ such that the path in $\Lambda$ that
begins at $A_i$ ends at $E_{\lambda(i)}$.

Now we are ready to apply a beautiful theorem relating enumerations of signed families of non-intersecting lattice paths and
determinants.  In particular, we use a theorem first given by Lindstr\"om in~\cite[Lemma~1]{Li} and stated independently
in~\cite[Theorem~1]{GV} by Gessel and Viennot.  Stanley gives a very nice exposition of the topic in~\cite[Section~2.7]{Stanley-2011}.

\begin{theorem}{\cite[Lemma~1]{Li} \& \cite[Theorem~1]{GV}} \label{thm:lgv}
    Assume $T \subset \mathcal{T}_d$ is a non-empty balanced subregion with identified lattice points
    $A_1, \ldots, A_m, E_1, \ldots, E_m \in L(T)$ as above. Then
    \[
        \det{N(T)} = \sum_{\lambda \in \PS_m} \sgn(\lambda) \cdot P^+_\lambda(A\rightarrow E),
    \]
    where, for each permutation $\lambda \in \PS_m$, $P^+_\lambda(A \rightarrow E)$ is the number of families of non-intersecting
    lattice paths with paths in $L(T)$ going from $A_i$ to $E_{\lambda(i)}$.
\end{theorem}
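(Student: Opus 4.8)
The plan is to prove this by the standard Lindström--Gessel--Viennot cancellation argument: first realize $\det N(T)$ combinatorially as a signed count of all families of lattice paths, and then show that every contribution coming from an intersecting family cancels against another of opposite sign.

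First I would expand the determinant directly from its definition. Since the entry $N(T)_{(i,j)}$ counts the lattice paths in $\ZZ^2$ from $A_i$ to $E_j$, the product $\prod_{i=1}^m N(T)_{(i,\lambda(i))}$ counts exactly the families $(P_1,\dots,P_m)$ of lattice paths in which $P_i$ runs from $A_i$ to $E_{\lambda(i)}$, where the paths are allowed to share points. Writing $P_\lambda(A\to E)$ for the number of all such families and splitting it as $P_\lambda = P^+_\lambda + P^-_\lambda$ according to whether the family is non-intersecting or intersecting, I obtain
\[
    \det N(T) = \sum_{\lambda \in \PS_m} \sgn(\lambda)\,\bigl(P^+_\lambda(A\to E) + P^-_\lambda(A\to E)\bigr).
\]
It then suffices to show that the signed total over the intersecting families vanishes, that is, $\sum_{\lambda}\sgn(\lambda)\,P^-_\lambda(A\to E)=0$.

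To establish this I would construct a sign-reversing involution on the set of all intersecting families (ranging over every $\lambda$). Given such a family, let $i$ be the smallest index for which $P_i$ meets another path, and let $v$ be the first lattice point along $P_i$ (starting from $A_i$) that lies on another path; among the paths through $v$ choose the one of smallest index $j$, which necessarily satisfies $j > i$ by minimality of $i$. Now swap the tails: redefine $P_i$ to follow its original portion up to $v$ and then the tail of $P_j$ beyond $v$, and symmetrically redefine $P_j$. This exchanges the endpoints $E_{\lambda(i)}$ and $E_{\lambda(j)}$, so the image family realizes the permutation $\lambda\circ(i\ j)$, whose sign is $-\sgn(\lambda)$.

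The step I expect to be the main obstacle is verifying that this tail-swap is a genuine involution, since that is precisely what makes the cancellation valid. The key observation is that the swap alters the two paths only beyond the crossing $v$, while the paths $P_1,\dots,P_{i-1}$ remain untouched and crossing-free; hence the minimal offending index is still $i$, the first crossing point along the new $P_i$ is still $v$ (the portion up to $v$ is unchanged and had no earlier crossing), and the minimal index of a second path through $v$ is again $j$. Thus applying the procedure twice returns the original family, and since it always flips the sign it pairs the intersecting families into sign-canceling pairs. After cancellation only the non-intersecting families survive, yielding
\[
    \det N(T) = \sum_{\lambda \in \PS_m} \sgn(\lambda)\,P^+_\lambda(A\to E),
\]
as claimed. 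Since the statement is attributed to \cite{Li} and \cite{GV}, one could alternatively invoke their theorem verbatim; the sketch above merely recalls the underlying argument in our setting.
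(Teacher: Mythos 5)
The paper never actually proves this statement---it is quoted from Lindstr\"om and Gessel--Viennot---so your reconstruction has to stand on its own. The cancellation half of your argument is fine: the tail-swap at the first meeting point $v$ of the minimal offending path is a well-defined sign-reversing involution on intersecting families, for exactly the reasons you give, so the signed count of intersecting families vanishes. The genuine gap is in what this cancellation proves. Since the entries of $N(T)$ count \emph{unrestricted} paths in $\ZZ^2$, your decomposition $P_\lambda = P^+_\lambda + P^-_\lambda$ forces your $P^+_\lambda$ to count non-intersecting families of paths in $\ZZ^2$, whereas the theorem's $P^+_\lambda$ counts families whose paths lie in $L(T)$. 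These are not the same by definition: $L(T)$ is a punctured lattice, and every puncture of side length $s \geq 2$ deletes lattice points (besides turning $s$ points on its upper-left side into end points $E_j$). A single path in $\ZZ^2$ from $A_i$ to $E_j$ can certainly run through deleted points, so the matrix with entries $\#\{\text{paths inside } L(T) \text{ from } A_i \text{ to } E_j\}$ is in general entrywise strictly smaller than $N(T)$; only the determinants agree. For the same reason you cannot simply ``invoke the theorem verbatim'': applied to the acyclic graph $L(T)$ it computes the determinant of that smaller matrix, not of $N(T)$, and applied to $\ZZ^2$ it yields your statement, not the paper's.

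What is missing is a blocking lemma reconciling the two counts: in any non-intersecting family of $\ZZ^2$-paths joining \emph{all} of $A_1, \ldots, A_m$ to \emph{all} of $E_1, \ldots, E_m$, no path can touch a deleted point, so such families automatically lie in $L(T)$. This is where the geometry of the punctures enters. In the orthogonalized coordinates every step (East or Southeast) increases $x - y$ by exactly one. For a puncture associated to $x^\alpha y^\beta z^\gamma$ that does not meet the upper-left boundary of $\mathcal{T}_d$, the deleted points are those with $y \geq \alpha$, $x \leq d-1-\beta$ and $x - y \geq \gamma + 1$, while the end points it carries are precisely the points with $y \geq \alpha$, $x \leq d-1-\beta$ and $x - y = \gamma$. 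Since $x$ never decreases and $y$ never increases along a path, a path reaching a deleted point must have crossed the level $x - y = \gamma$ at one of these end points $E_{j_0}$; but some path of the family terminates at $E_{j_0}$, so either the path in question ends there (and never reaches the deleted point) or two paths share $E_{j_0}$, contradicting non-intersection. (Punctures meeting the upper-left boundary delete only points that monotone paths starting on that boundary can never reach, and overlapping punctures require iterating the argument from one puncture's diagonal down to the other's.) Once this lemma is added, your involution argument does prove the theorem as stated; without it, you have proved a formula for $\det N(T)$ in terms of families in $\ZZ^2$, which is not yet the assertion about families in $L(T)$ that the rest of the paper relies on.
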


We now use a well-know bijection between lozenge tilings of $T$ and families of non-intersecting lattice paths from $A_1, \ldots, A_m$ to $E_1, \ldots, E_m$;
see, e.g., the survey~\cite{Pr}.  Let $\tau$ be a lozenge tiling of $T$.  Using the lozenges of $\tau$ as a guide,
we connect each pair of vertices of $L(T)$ that occur on a single lozenge.  This generates a family of non-intersecting lattice
paths $\Lambda$ of $L(T)$ corresponding to $\tau$.  See Figures~\ref{fig:build-nilp}(ii) and~(iii) for the overlayed image and the family of non-intersecting
lattice paths by itself, respectively.

\begin{figure}[!ht]
    \begin{minipage}[b]{0.32\linewidth}
        \centering
        \includegraphics[scale=1]{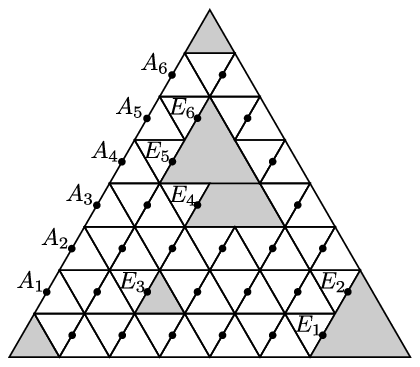}\\
        \emph{(i) The sub-lattice $L(T)$.}
    \end{minipage}
    \begin{minipage}[b]{0.32\linewidth}
        \centering
        \includegraphics[scale=1]{figs/build-nilp-2}\\
        \emph{(ii) The overlayed image.}
    \end{minipage}
    \begin{minipage}[b]{0.32\linewidth}
        \centering
        \includegraphics[scale=1]{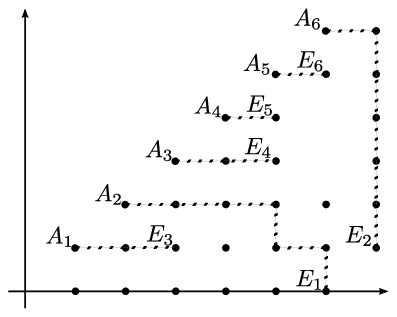}\\
        \emph{(iii) The family $\Lambda$.}
    \end{minipage}
    \caption{The family of non-intersecting lattice paths $\Lambda$ associated to the tiling $\tau$ in Figure~\ref{fig:triregion-intro}.}
    \label{fig:build-nilp}
\end{figure}

This bijection provides another way for assigning a sign to a lozenge tiling, this time using the signature of the permutation $\lambda$.

\begin{definition} \label{def:nilp-sign}
    Let $T \subset \mathcal{T}_d$ be a non-empty balanced subregion as above, and let $\tau$ be a lozenge tiling of $T$.
    Then we define the \emph{lattice path sign} of $\tau$ as $\lpsgn{\tau} := \sgn{\lambda}$, where $\lambda \in \PS_m$ is
    the permutation such that, for each $i$, the lattice path determined by $\tau$ that starts at $A_i$ ends at $E_{\lambda (i)}$.
\end{definition}

It follows that the determinant of $N(T)$ gives an enumeration of the \emph{lattice path signed lozenge tilings of $T$}.

\begin{theorem} \label{thm:nilp-matrix}
    Let $T \subset \mathcal{T}_d$ be a non-empty balanced subregion.  Then the lattice path signed lozenge tilings of $T$
    are enumerated by $\det{N(T)}$, that is,
    \[
        \sum_{\tau \text{tiling of } T} \lpsgn{\tau} = \det{N(T)}.
    \]
\end{theorem}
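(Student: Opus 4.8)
The plan is to obtain the identity by combining two ingredients that are already in place: the bijection between lozenge tilings of $T$ and families of non-intersecting lattice paths on $L(T)$, described just before Definition~\ref{def:nilp-sign}, and the Lindström–Gessel–Viennot formula of Theorem~\ref{thm:lgv}. The lattice path sign $\lpsgn{\tau}$ was defined precisely so as to record the permutation $\lambda \in \PS_m$ that governs which source $A_i$ is joined to which sink $E_{\lambda(i)}$ in the family $\Lambda$ associated to $\tau$. Hence the left-hand side of the claimed equality is, after regrouping the tilings according to this permutation, exactly the alternating sum appearing on the right of Theorem~\ref{thm:lgv}. In other words, the theorem is essentially a reindexing of the sum, with the bijection supplying the dictionary between tilings and families and the definition of $\lpsgn$ supplying the matching of signs.

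Concretely, I would first recall that $\tau \mapsto \Lambda$ is a bijection between the lozenge tilings of $T$ and the families of $m$ non-intersecting lattice paths in $L(T)$ with sources $A_1, \ldots, A_m$ and sinks $E_1, \ldots, E_m$. Since $T$ is balanced we have exactly $m$ sources and $m$ sinks, so every such family joins the full set of the $A_i$ to the full set of the $E_j$ and therefore determines a well-defined permutation $\lambda \in \PS_m$, with the path starting at $A_i$ ending at $E_{\lambda(i)}$; by Definition~\ref{def:nilp-sign}, $\lpsgn{\tau} = \sgn{\lambda}$. I would then partition the tilings of $T$ according to the permutation they induce. For a fixed $\lambda \in \PS_m$, the bijection identifies the tilings inducing $\lambda$ with the families of non-intersecting lattice paths running from $A_i$ to $E_{\lambda(i)}$, of which there are exactly $P^+_\lambda(A \rightarrow E)$. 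Consequently,
\[
    \sum_{\tau \text{ tiling of } T} \lpsgn{\tau}
    = \sum_{\lambda \in \PS_m} \sum_{\substack{\tau \text{ tiling of } T \\ \tau \text{ induces } \lambda}} \sgn{\lambda}
    = \sum_{\lambda \in \PS_m} \sgn{\lambda} \cdot P^+_\lambda(A \rightarrow E),
\]
and Theorem~\ref{thm:lgv} identifies the right-hand side with $\det N(T)$, completing the proof.

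The only genuine content beyond this bookkeeping is the correctness of the bijection $\tau \mapsto \Lambda$: that it is well-defined, that the resulting paths are non-intersecting, and that every family of non-intersecting lattice paths from $A_1, \ldots, A_m$ to $E_1, \ldots, E_m$ arises from a unique tiling. This is the standard lozenge-tiling/lattice-path correspondence, for which I would cite the survey~\cite{Pr}, or else verify it directly by observing that two of the three lozenge orientations each join two vertices of $L(T)$ by a single East or Southeast step, while the third orientation contains just one vertex of $L(T)$; reading off these steps turns a tiling into a family of lattice paths, and the process is reversible. The one point where balancedness is genuinely used is in guaranteeing that the two endpoint sets are matched bijectively, so that a genuine permutation $\lambda \in \PS_m$ is produced; this is the main thing to check, and everything else is the reindexing displayed above.
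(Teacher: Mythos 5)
Your proposal is correct and matches the paper's approach: the paper establishes this theorem exactly by combining the tiling/lattice-path bijection (citing the survey~\cite{Pr}) with Theorem~\ref{thm:lgv} and Definition~\ref{def:nilp-sign}, so the result is the same regrouping of the Lindstr\"om--Gessel--Viennot sum that you describe. The paper in fact leaves this bookkeeping implicit, stating the theorem as an immediate consequence of the preceding discussion, so your write-up simply makes the same argument explicit.
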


\begin{remark} \label{rem:rotations}
    Notice that we can use the above construction to assign, for each subregion $T$,  three (non-trivially) different lattice path matrices.
    The matrix $N(T)$ from Theorem~\ref{thm:nilp-matrix} is one of these matrices, and the other two are the $N(\cdot)$ matrices of the
    $120^{\circ}$ and $240^{\circ}$ rotations of $T$.  See Figure~\ref{fig:three-rotations} for an example.

    \begin{figure}[!ht]
        \includegraphics[scale=1]{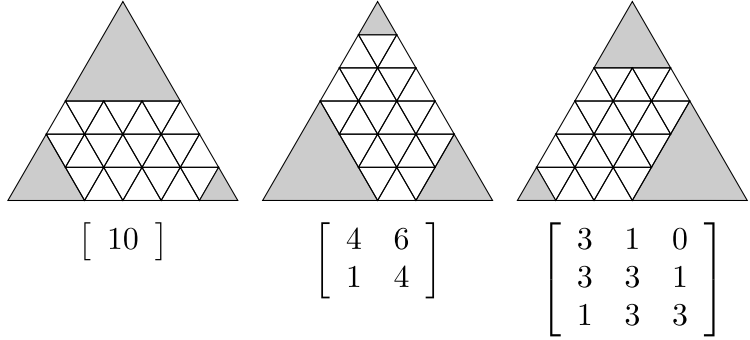}
        \caption{The triangular region $T_6(x^3, y^4, z^5)$ and its rotations, along with their lattice path matrices. }
        \label{fig:three-rotations}
    \end{figure}
\end{remark}

\section{Resolution of punctures} \label{sec:resolution}

In the previous section we associated  two different signs, the perfect matching sign and the lattice path sign,  to each lozenge tiling of a
balanced region $T$.  In the case where $T$ is a triangular region, we demonstrate in this section that the
signs are equivalent, up to a scaling factor dependent only on $T$. In particular, Theorem~\ref{thm:detZN} states that
$|\det{Z(T)}| = |\det{N(T)}|$. In order to prove this result, we introduce a new method that we call resolution of a puncture. Throughout this section $T$
is a tileable triangular region. In particular,  $T$ is balanced.

\subsection{The construction}\label{sub:rez}~\par

Our first objective is to describe a construction that  removes a puncture from a triangular region, relative to some tiling, in a controlled fashion. More precisely, starting from a given region with a puncture, we produce a larger triangular region without this puncture.

We begin by considering the special case, in which we assume that $T \subset \mathcal{T}_d$ has at least one puncture, call it $\mathcal{P}$, that is not overlapped
by any other puncture of $T$. Let $\tau$ be some lozenge tiling of $T$, and denote by $k$ the side length of
$\mathcal{P}$. Informally, we will replace $T$ by a triangular region in $\mathcal{T}_{d + 2k}$, where the place of
the puncture $\mathcal{P}$ of $T$ is taken by a tiled regular hexagon of side length $k$ and three corridors to the outer
vertices of $\mathcal{T}_{d + 2k}$ that are all part of the new region.

\begin{figure}[!ht]
    \begin{minipage}[b]{0.48\linewidth}
        \centering
        \includegraphics[scale=1]{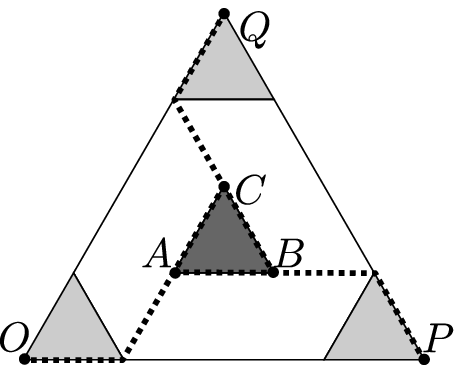}\\
        \emph{(i)  The splitting chains.}
    \end{minipage}
    \begin{minipage}[b]{0.48\linewidth}
        \centering
        \includegraphics[scale=1]{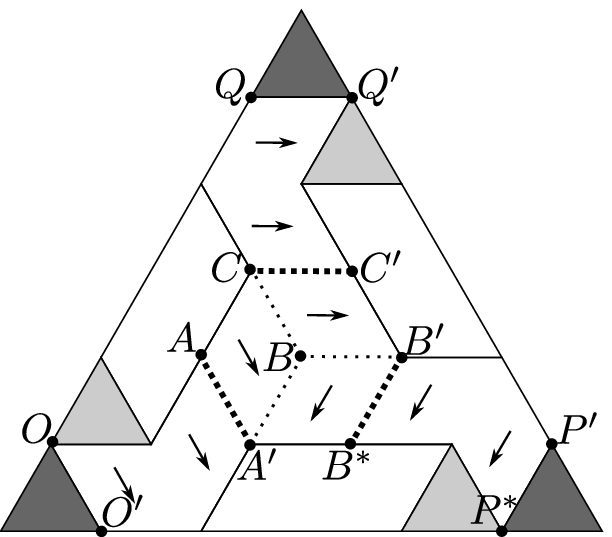}\\
        \emph{(ii) The resolution $T'$.}
    \end{minipage}
    \caption{The abstract resolution of a puncture.}
    \label{fig:resolve-abstract}
\end{figure}

As above, we label the vertices of $\mathcal{T}_d$ such that the label of each unit triangle is the greatest common
divisor of its vertex labels. For ease of reference, we denote the lower-left, lower-right, and top vertex of the
puncture $\mathcal{P}$ by $A, B$, and $C$, respectively. Similarly, we denote the lower-left, lower-right, and top vertex
of $\mathcal{T}_d$ by $O, P$, and $Q$, respectively. Now we select three chains of unit edges such that each edge
is either in $T$ or on the boundary of a puncture of $T$. We start by choosing chains connecting $A$ to $O$, $B$
to $P$, and $C$ to $Q$, respectively, subject to the following conditions:
\begin{itemize}
    \item The chains do not cross, that is, do not share any vertices.
    \item There are no redundant edges, that is, omitting any unit edge destroys the connection between the desired end points of a chain.
    \item There are no moves to the East or Northeast on the lower-left chain $OA$.
    \item There are no moves to the West or Northwest on the lower-right chain $PB$.
    \item There are no moves to the Southeast or Southwest on the top chain $CQ$.
\end{itemize}
For these directions we envision a particle that starts at a vertex of the puncture and moves on a chain to the corresponding corner vertex of $\mathcal{T}_d$.

Now we connect the chains $OA$ and $CQ$ to a chain of unit edges $OACQ$ by using the Northeast edge of $\mathcal{P}$. Similarly we
connect the chains $OA$ and $BP$ to a chain $OABP$ by using the horizontal edge of $\mathcal{P}$, and we connect
$PB$ and $CQ$ to the chain $PBCQ$ by using the Northwest side of $\mathcal{P}$. These three chains subdivide
$\mathcal{T}_d$ into four regions. Part of the boundary of three of these regions is an edge of $\mathcal{T}_d$.
The fourth region, the central one, is the area of the puncture $\mathcal{P}$. See Figure~\ref{fig:resolve-abstract}(i)
for an illustration.

Now consider $T \subset \mathcal{T}_d$ as embedded into $\mathcal{T}_{d+ 2k}$ such that the original region
$\mathcal{T}_d$ is identified with the triangular region $T_{d+2k} (x^k y^k)$. Retain the names $A, B, C, O, P$, and $Q$
for the specified vertices of $T$ as above. We create new chains of unit edges in $\mathcal{T}_{d+ 2k}$.

First, multiply each vertex in the chain $PBCQ$ by $\frac{z^k}{y^k}$ and connect the resulting vertices to a chain
$P'B'C'Q'$ that is parallel to the chain $PBCQ$. Here $P', B', C'$, and $Q'$ are the images of $P, B, C$, and $Q$ under
the multiplication by $\frac{z^k}{y^k}$. Informally, the chain $P'B'C'Q'$ is obtained by moving the chain $PBCQ$ just
$k$ units to the East.

Second, multiply each vertex in the chain $OA$ by $\frac{z^k}{x^k}$ and connect the resulting vertices to a chain $O'A'$
that is parallel to the chain $OA$. Here $A'$ and $O'$ are the points corresponding to $A$ and $O$. Informally the chain
$O'A'$ is obtained by moving the chain $OA$ just $k$ units to the Southeast.

Third, multiply each vertex in the chain $P'B'$ by $\frac{y^k}{x^k}$ and connect the resulting vertices to a chain
$P^*B^*$ that is parallel to the chain $P'B'$, where $P^*$ and $B^*$ are the images of $P'$ and $B'$, respectively.
Thus, $P^*B^*$ is $k$ units to the Southwest of the chain $P'B'$. Connecting $A'$ and $B^*$ by horizontal edges, we
obtain a chain $O'A'B^*P^*$ that has the same shape as the chain $OABP$. See Figure~\ref{fig:resolve-abstract}(ii) for
an illustration.

We are ready to describe the desired triangular region $T' \subset \mathcal{T}_{d+2k}$ along with a tiling. Place
lozenges and punctures in the region bounded by the chain $OACQ$ and the Northeast boundary of $\mathcal{T}_{d+2k}$ as
in the corresponding region of $T$. Similarly place lozenges and punctures in the region bounded by the chain $P'B'C'Q'$
and the Northwest boundary of $\mathcal{T}_{d+2k}$ as in the corresponding region of $T$ that is bounded by $PBCQ$.
Next, place lozenges and punctures in the region bounded by the chain $O'A'B^*P^*$ and the horizontal boundary of
$\mathcal{T}_{d+2k}$ as in the exterior region of $T$ that is bounded by $OABP$. Observe that corresponding vertices of
the parallel chains $BCQ$ and $B'C'Q'$ can be connected by horizontal edges. The region between two such edges that are
one unit apart is uniquely tileable. This gives a lozenge tiling for the region between the two chains. Similarly, the
corresponding vertices of the parallel chains $OAC$ and $O'A'C'$ can be connected by Southeast edges. Respecting these
edges gives a unique lozenge tiling for the region between the chains $OAC$ and $O'A'C'$. In a similar fashion, the
corresponding vertices of the parallel chains $P'B'$ and $P^*B^*$ can be connected by Southwest edges, which we use as a
guide for a lozenge tiling of the region between the two chains. Finally, the rhombus with vertices $A', B^*, B'$, and
$B$ admits a unique lozenge tiling. Let $\tau'$ the union of all the lozenges we placed in $\mathcal{T}_{d+2k}$, and
denote by $T'$ the triangular region that is tiled by $\tau'$. Thus, $T' \subset \mathcal{T}_{d+2k}$ has a puncture of
side length $k$ at each corner of $\mathcal{T}_{d+2k}$. See Figure~\ref{fig:resolve-simple} for an illustration of this.
We call the region $T'$ with its tiling $\tau'$ a \emph{resolution of the puncture $\mathcal{P}$ in $T$ relative to $\tau$}
or, simply, a \emph{resolution of $\mathcal{P}$}.

Observe that the tiles in $\tau'$ that were not carried over from the tiling $\tau$ are in the region that is the union
of the regular hexagon with vertices $A, A', B^*, B', C'$ and $C$ and the regions between the parallel chains $OA$ and
$O'A'$, $CQ$ and $C'Q'$ as well as $P'B'$ and $P^*B^*$. We refer to the latter three regions as the \emph{corridors} of
the resolution. Furthermore, we call the chosen chains $OA$, $PB$, and $CQ$ the \emph{splitting chains}
of the resolution. The resolution blows up each splitting chain to a corridor of width $k$.

\begin{figure}[!ht]
    \begin{minipage}[b]{0.48\linewidth}
        \centering
        \includegraphics[scale=1]{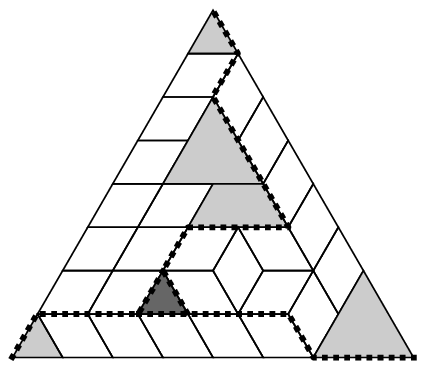}\\
        \emph{(i)  The selected lozenge and puncture edges.}
    \end{minipage}
    \begin{minipage}[b]{0.48\linewidth}
        \centering
        \includegraphics[scale=1]{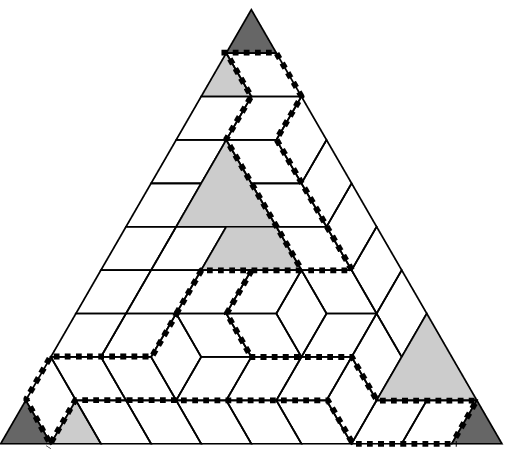}\\
        \emph{(ii) The resolution $T'$ with tiling $\tau'$.}
    \end{minipage}
    \caption{A resolution of the puncture associated to $x y^4 z^2$, given the tiling $\tau$ in Figure~\ref{fig:triregion-intro} of $T$.}
    \label{fig:resolve-simple}
\end{figure}

Finally, in order to deal with an arbitrary puncture  suppose a puncture $\mathcal{P}$ in $T$ is overlapped by another puncture of $T$. Then we cannot resolve
$\mathcal{P}$ using the above technique directly as it would result in a non-triangular region. Thus, we adapt the construction. Since
$T$ is balanced, $\mathcal{P}$ is overlapped by exactly one puncture of $T$ (see Theorem~\ref{thm:tileable}). Let $U$ be
the smallest monomial subregion of $T$ that contains both punctures. We call $U$ the \emph{minimal covering region}
of the two punctures. It is is uniquely tileable, and we resolve the puncture $U$ of $T \setminus U$. Notice that the
lozenges inside $U$ are lost during resolution. However, since $U$ is uniquely tileable, they are recoverable from the
two punctures of $T$ in $U$. See Figure~\ref{fig:resolve-family} for an illustration.

\begin{figure}[!ht]
    \begin{minipage}[b]{0.48\linewidth}
        \centering
        \includegraphics[scale=1]{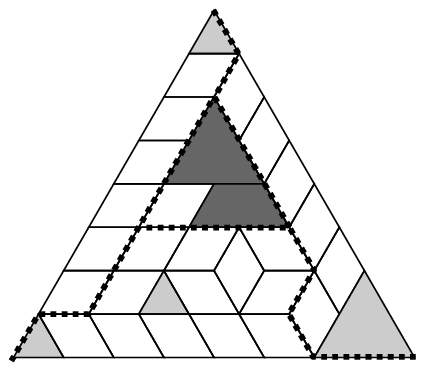}\\
        \emph{(i)  The selected lozenge and puncture edges.}
    \end{minipage}
    \begin{minipage}[b]{0.48\linewidth}
        \centering
        \includegraphics[scale=0.75]{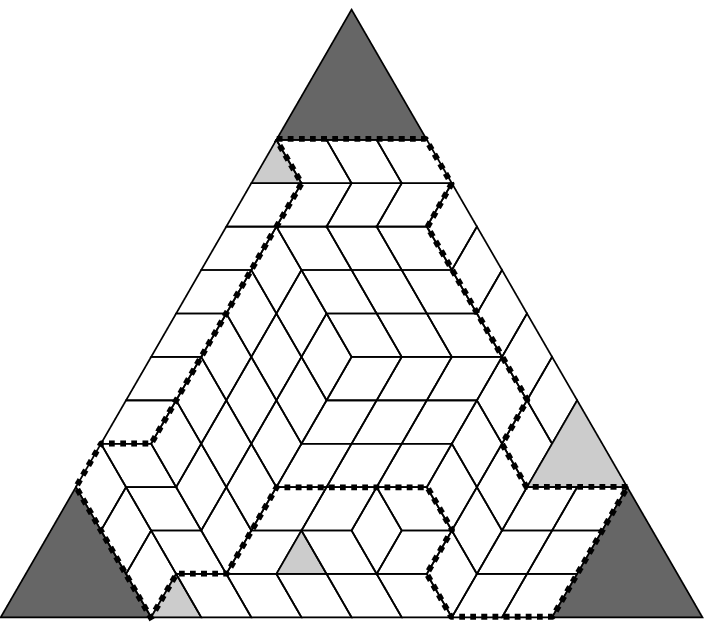}\\
        \emph{(ii) The resolution $T'$ with tiling $\tau'$.}
    \end{minipage}
    \caption{Resolving overlapping punctures, given the tiling in Figure~\ref{fig:triregion-intro}.}
    \label{fig:resolve-family}
\end{figure}

\subsection{Cycles of lozenges}\label{sub:cyc}~\par

We now introduce another concept. It will help us to  analyze the changes when resolving a puncture.

Let $\tau$ be some tiling of  a triangular region $T$. An \emph{$n$-cycle (of lozenges)}
$\sigma$ in $\tau$ is an ordered collection of distinct lozenges $\ell_1, \ldots, \ell_n$ of $\tau$ such that the
downward-pointing triangle of $\ell_i$ is adjacent to the upward-pointing triangle of $\ell_{i+1}$ for $1 \leq i < n$
and the downward-pointing triangle of $\ell_n$ is adjacent to the upward-pointing triangle of $\ell_1$. The smallest
cycle of lozenges is a three-cycle; see Figure~\ref{fig:three-cycle}.

\begin{figure}[!ht]
    \includegraphics[scale=1]{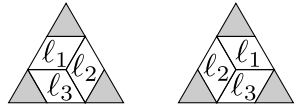}
    \caption{$T_3(x^2, y^2, z^2)$ has two tilings, both are three-cycles of lozenges.}
    \label{fig:three-cycle}
\end{figure}

Let $\sigma = \{\ell_1, \ldots, \ell_n\}$ be an $n$-cycle of lozenges in the tiling $\tau$ of $T$. If we replace the
lozenges in $\sigma$ be the $n$ lozenges created by adjoining the downward-pointing triangle of $\ell_i$ with the
upward-pointing triangle of $\ell_{i+1}$ for $1 \leq i < n$ and the downward-pointing triangle of $\ell_n$ with the
upward-pointing triangle of $\ell_1$, then we get a new tiling $\tau'$ of $T$. We call this new tiling the \emph{twist of $\sigma$}
in $\tau$. The two three-cycles in Figure~\ref{fig:three-cycle} are twists of each other. See
Figure~\ref{fig:cycle-twist} for another example of twisting a cycle. A puncture is \emph{inside}
the cycle $\sigma$ if the lozenges of the cycle fully surround the puncture. In Figure~\ref{fig:cycle-twist}(i), the
puncture associated to $x y^4 z^2$ is inside the cycle $\sigma$ and all other punctures of $T$ are not inside the cycle $\sigma$.

\begin{figure}[!ht]
    \begin{minipage}[b]{0.48\linewidth}
        \centering
        \includegraphics[scale=1]{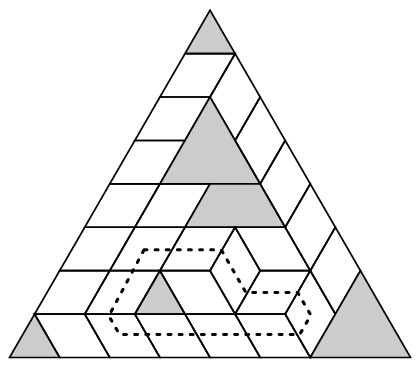}\\
        \emph{(i) A $10$-cycle $\sigma$.}
    \end{minipage}
    \begin{minipage}[b]{0.48\linewidth}
        \centering
        \includegraphics[scale=1]{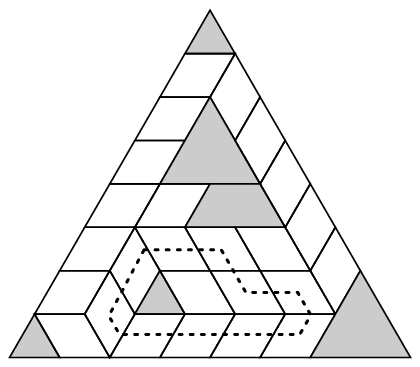}\\
        \emph{(ii) The twist of $\sigma$ in $\tau$.}
    \end{minipage}
    \caption{A $10$-cycle $\sigma$ in the tiling $\tau$ (see Figure~\ref{fig:triregion-intro}(ii)) and its twist.}
    \label{fig:cycle-twist}
\end{figure}

Recall that the perfect matching sign of a tiling $\tau$ is denoted by $\msgn{\tau}$ (see Definition~\ref{def:pm-sign}).

\begin{lemma} \label{lem:twist-sign}
    Let $\tau$ be a lozenge tiling of a triangular region $T = T_d(I)$, and let $\sigma$ be an $n$-cycle of lozenges in $\tau$.
    Then the twist $\tau'$ of $\sigma$ in $\tau$ satisfies $\msgn{\tau'} = (-1)^{n-1}\msgn{\tau}$.
\end{lemma}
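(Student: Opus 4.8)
The plan is to track exactly how the perfect-matching permutation changes under a single twist, showing that it differs from the original by one $n$-cycle, so that the signature—and hence $\msgn$—picks up a factor of $(-1)^{n-1}$.

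First I would set up notation from Definition~\ref{def:pm-sign}. Write $m = \#\uptri(T) = \#\dntri(T)$ and identify both colour classes $B$ and $W$ with $\{1, \ldots, m\}$ via the reverse-lexicographic order. Then $\tau$ determines the permutation $\pi \in \PS_m$ with $\msgn{\tau} = \sgn{\pi}$, where $\pi(i) = j$ precisely when the lozenge of $\tau$ containing the down-triangle $B_i$ also contains the up-triangle $W_j$. For each lozenge $\ell$ of $\tau$, let $b(\ell)$ and $w(\ell)$ be the $B$- and $W$-indices of its down- and up-triangle; thus $\pi(b(\ell)) = w(\ell)$, and as $\ell$ ranges over all lozenges of $\tau$ these pairs describe $\pi$ completely. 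Next I would read off $\pi'$ directly from the construction of the twist: $\tau'$ agrees with $\tau$ on every lozenge outside $\sigma$, while each lozenge of $\sigma$ is replaced by one joining the down-triangle of $\ell_i$ to the up-triangle of $\ell_{i+1}$ (indices cyclic modulo $n$). Hence $\pi'(b(\ell_i)) = w(\ell_{i+1})$ for the lozenges in $\sigma$, and $\pi'(p) = \pi(p)$ for every other index $p$.

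The key step is to compare $\pi$ and $\pi'$. Let $c \in \PS_m$ be the permutation of the $W$-indices that sends $w(\ell_i) \mapsto w(\ell_{i+1})$ cyclically and fixes every other index. Since the lozenges $\ell_1, \ldots, \ell_n$ are distinct and every triangle lies in exactly one lozenge of a tiling, the indices $w(\ell_1), \ldots, w(\ell_n)$ are pairwise distinct, so $c$ is a single $n$-cycle and $\sgn{c} = (-1)^{n-1}$. I would then verify that $\pi' = c \circ \pi$: on a down-index $b(\ell_i)$ in the cycle, $c(\pi(b(\ell_i))) = c(w(\ell_i)) = w(\ell_{i+1}) = \pi'(b(\ell_i))$; on any other index $p$, the value $\pi(p)$ is none of the $w(\ell_i)$ (these are already the $\pi$-images of the $b(\ell_i)$, and $\pi$ is a bijection), so $c(\pi(p)) = \pi(p) = \pi'(p)$. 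Multiplicativity of the signature then yields $\sgn{\pi'} = \sgn{c}\,\sgn{\pi} = (-1)^{n-1}\sgn{\pi}$, that is, $\msgn{\tau'} = (-1)^{n-1}\msgn{\tau}$.

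I expect the only genuine subtlety to be the bookkeeping of which colour class the $n$-cycle acts on; equivalently one may write $\pi'$ as $\pi$ precomposed with the $n$-cycle on the $b(\ell_i)$, and the conclusion is the same. The one point requiring a remark is the distinctness of the relevant indices, needed to guarantee that $c$ is a genuine $n$-cycle rather than a product of shorter cycles—but this follows immediately from $\sigma$ being a cycle of distinct lozenges in a tiling. No validity check on $\tau'$ is needed, since it is already given to be a tiling in the definition of the twist.
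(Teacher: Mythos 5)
Your proposal is correct and follows essentially the same route as the paper: both arguments show that the twisted perfect-matching permutation equals the original composed with a single $n$-cycle, then invoke multiplicativity of the signature. The only difference is cosmetic—the paper relabels "without loss of generality" so the cycle becomes $(1,2,\ldots,n)$, whereas you carry out the index bookkeeping explicitly via the map $w(\ell_i)\mapsto w(\ell_{i+1})$, which is slightly more careful but the identical idea.
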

\begin{proof}
    Let $\pi$ and $\pi'$ be the perfect matching permutations associated to $\tau$ and $\tau'$, respectively (see Definition \ref{def:pm-sign}).  Without loss of generality,
    assume each lozenge $\ell_i$ in $\sigma$ corresponds to the upward- and downward-pointing triangles labeled $i$.  As $\tau'$ is a twist of $\tau$ by $\sigma$,
    then $\pi'(i) = i+1$ for $1 \leq i < n$ and $\pi'(n) = 1$.  That is, $\pi' = (1, 2, \ldots, n) \cdot \pi$, as permutations.  Hence,
    $\msgn{\tau'} = (-1)^{n-1}\msgn{\tau}$.
\end{proof}

\subsection{Resolutions, cycles of lozenges, and signs}\label{sub:rez-cyc}~\par

Now we are going to establish the equivalence of the perfect matching and the lattice path sign of a lozenge tiling. We begin by describing the modification of a cycle of lozenges when a puncture is resolved.

We first need a definition.  It
uses the starting and end points of lattice paths $A_1,\ldots,A_m$ and $E_1,\ldots,E_m$, as introduced at the beginning
of Subsection~\ref{sub:nilp}.

The \emph{$E$-count}
of a cycle is the number of lattice path end points $E_j$ ``inside'' the cycle. Alternatively, this
can be seen as the sum of the side lengths of the non-overlapping punctures plus the sum of the side lengths of the
minimal covering regions of pairs of overlapping punctures. For example, the cycles shown in
Figure~\ref{fig:three-cycle} have $E$-counts of zero, the cycles shown in Figure~\ref{fig:cycle-twist} have $E$-counts
of $1$, and the (unmarked) cycle going around the outer edge of the tiling shown in Figure~\ref{fig:cycle-twist}(i) has an
$E$-count of $1 + 3 = 4$.

Now we describe the change of a cycle surrounding a puncture when this puncture is resolved.

\begin{lemma} \label{lem:cycle-res}
    Let $\tau$ be a lozenge tiling of $T = T_d(I)$, and let $\sigma$ be an $n$-cycle of lozenges in $\tau$.
    Suppose $T$ has a puncture $P$ (or a minimal covering region of a pair of overlapping punctures) with $E$-count $k$.
    Let $T'$ be a resolution of $P$ relative to $\tau$.  Then the resolution takes $\sigma$ to an $(n+kl)$-cycle of
    lozenges $\sigma'$ in the resolution, where $l$ is the number of times the splitting chains of the resolution
    cross the cycle $\sigma$ in $\tau$.  Moreover, $l$ is odd if and only if $P$ is inside $\sigma.$
\end{lemma}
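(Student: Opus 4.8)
The plan is to treat the two assertions separately: first the length $n+kl$, then the parity of $l$. Throughout I would keep in mind, from the construction in Subsection~\ref{sub:rez}, that the resolution $T'$ carries over the lozenges of $\tau$ lying in the three outer regions (those bounded by the chains $OACQ$, $OABP$, and $PBCQ$ together with an edge of $\mathcal{T}_{d+2k}$), while the genuinely new lozenges of $\tau'$ lie in the central hexagon and in the three corridors, each of which has width equal to $k$, the $E$-count of $\mathcal{P}$. Since $\sigma$ is a ring of lozenges avoiding the puncture itself, none of its lozenges lie in the central hexagon, so every lozenge of $\sigma$ is either transported rigidly into $T'$ or is created inside a corridor.

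For the count, I would walk around $\sigma$ and record what happens each time it meets a splitting chain. Away from the chains the cyclic adjacency (the down-triangle of $\ell_i$ adjacent to the up-triangle of $\ell_{i+1}$) is preserved verbatim, because those lozenges are carried over unchanged. At a transverse crossing of $\sigma$ with a splitting chain, i.e.\ an adjacency edge of the cycle lying on a chain, the resolution inserts the uniquely tileable width-$k$ corridor between the original chain and its parallel translate; the strip of that corridor spanning the crossing is tiled by exactly $k$ lozenges, which splice into $\sigma$ between the two lozenges it previously joined. Thus each of the $l$ crossings lengthens the cycle by exactly $k$ lozenges and leaves the adjacency structure intact, producing a single closed cycle $\sigma'$ with $n+kl$ lozenges. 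The main obstacle is precisely this bookkeeping at the crossings: one must verify that a chain edge traversed by $\sigma$ really corresponds to $k$ inserted lozenges (using that the corridor is uniquely tiled and of width $k$) and that no crossing splits $\sigma$ into several cycles or fuses distinct segments. The overlapping case is handled identically, taking $\mathcal{P}$ to be the minimal covering region $U$ and $k$ its side length, exactly as in the construction.

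For the parity of $l$, I would argue topologically. Contract the puncture $\mathcal{P}$ to a single point $p$; the three splitting chains $OA$, $PB$, and $CQ$ then become three arcs joining $p$ to the corners of $\mathcal{T}_d$, forming a tripod whose leaves lie on the boundary. View $\sigma$ as a closed loop in $\mathcal{T}_d\setminus\{p\}$, which it is since the puncture carries no lozenges. The elementary fact I would invoke is that, for any arc from $p$ to the boundary, a closed loop crosses that arc with parity equal to its winding number about $p$ modulo $2$: the endpoints of the arc lie on opposite sides of a small loop encircling $p$ once, so the number of crossings is odd exactly when $\sigma$ encircles $p$. Applying this to each of the three arcs and summing, the total crossing number satisfies $l \equiv 3\,w \equiv w \pmod 2$, where $w$ is the winding number of $\sigma$ about $p$. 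Finally, $w$ is odd precisely when $\sigma$ surrounds the puncture, i.e.\ when $\mathcal{P}$ is inside $\sigma$ in the sense of Subsection~\ref{sub:cyc}. Hence $l$ is odd if and only if $\mathcal{P}$ is inside $\sigma$, as claimed.
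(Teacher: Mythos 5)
Your proposal follows essentially the paper's own route. The length count is the same argument: a crossing is an adjacency edge of the cycle lying on a splitting chain, the resolution expands such an edge into $k+1$ parallel edges whose consecutive pairs bound $k$ new lozenges, and these splice into the cycle, giving $n+kl$ in total (cf.\ Figure~\ref{fig:cycle-insert}). Your winding-number argument for the parity of $l$ is the paper's argument in topological dress: the paper observes that each splitting chain ends outside $\sigma$ (at a corner of $\mathcal{T}_d$) and starts at $P$, so each chain crosses $\sigma$ an even number of times if $P$ is outside $\sigma$ and an odd number of times if $P$ is inside, whence $l$ is even, respectively $l = 3+2j$ is odd; this is exactly your $l \equiv 3w \equiv w \pmod 2$.

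The one genuine gap is the overlapping case, which you dismiss as ``handled identically.'' For an actual puncture, $\sigma$ trivially avoids the resolved region, since that region contains no lozenges at all. But the minimal covering region $U$ of two overlapping punctures \emph{does} contain lozenges of $\tau$, and the resolution destroys them (the construction resolves $U$ as a puncture of $T \setminus U$, and the lozenges inside $U$ are lost). So before anything else one must check that $\sigma$ contains none of these lozenges; otherwise $\sigma$ would not even survive to a cycle $\sigma'$ in $\tau'$, and your opening claim that every lozenge of $\sigma$ is carried over would fail. The paper supplies the reason: $T \cap U$ is uniquely tileable, so by Lemma~\ref{lem:replace-tileable} its lozenges occur in \emph{every} tiling of $T$, i.e., they are immutable; a lozenge lying on a cycle cannot be immutable, since twisting that cycle yields a tiling of $T$ in which it does not appear. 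With that one observation added, your proof is complete.
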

\begin{proof}
    Fix a resolution $T' \subset \mathcal{T}_{d+2k}$ of $P$ with tiling $\tau'$ as induced by $\tau$.

    First, note that if $P$ is a minimal covering region of a pair of overlapping punctures, then any cycle of lozenges must
    avoid the lozenges present in $P$ as all such lozenges are forcibly chosen, i.e., immutable.  Thus, all lozenges of
    $\sigma$ are present in $\tau'$.

    The resolution takes the cycle $\sigma$ to a cycle $\sigma'$ by adding $k$ new lozenges for each unit edge of a lozenge in $\sigma$
    that belongs to a splitting chain.  More precisely, such an edge is expanded to $k+1$ parallel edges.  Any two consecutive
    edges form the opposite sides of a lozenge (see Figure~\ref{fig:cycle-insert}).  Thus, each time a splitting chain of the resolution
    crosses the cycle $\sigma$ we insert $k$ new lozenges.  As $l$ is the number of times the splitting chains of the resolution
    cross the cycle $\sigma$ in $\tau$, the resolution adds exactly $k l$ new lozenges to the extant lozenges of $\sigma$.
    Thus, $\sigma'$ is an $(n + k l)$-cycle of lozenges in $\tau'$.

    \begin{figure}[!ht]
        \includegraphics[scale=1]{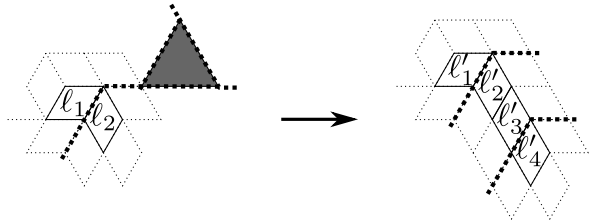}
        \caption{Expansion of a lozenge cycle at a crossing of a splitting chain.}
        \label{fig:cycle-insert}
    \end{figure}

    Since the splitting chains are going from $P$ to the boundary of the triangle $\mathcal{T}_d$, the splitting chains terminate
    outside the cycle.  Hence if the splitting chain crosses into the cycle, it must cross back out.  If $P$ is outside
    $\sigma$, then the splitting chains start outside $\sigma$, and so $l$ must be even.  On the other hand,
    if $P$ is inside $\sigma$, then the splitting chains start inside of $\sigma$, and so $l = 3 + 2j$, where $j$
    is the number of times the splitting chains cross \emph{into} the cycle.
\end{proof}

Let $\tau_1$ and $\tau_2$ be tilings of $T$, and let $\pi_1$ and $\pi_2$ be their respective perfect matching permutations.  Suppose
$\pi_2 = \rho \pi_1$, for some permutation $\rho$.  Write $\rho$ as a product of disjoint cycles whose length is at least two.
(Note that these cycles will be of length at least three.) Each factor corresponds to a  cycle of lozenges of $\tau_1$. If all these cycles are twisted we get $\tau_2$. We call these
lozenge cycles the \emph{difference cycles} of $\tau_1$ and $\tau_2$.

Using the idea of difference cycles, we characterise when two tilings have the same perfect matching sign.

\begin{corollary} \label{cor:msgn-cycle}
    Let $\tau$ be a lozenge tiling of $T = T_d(I)$, and let $\sigma$ be an $n$-cycle of lozenges in $\tau$.  Then
    the following statements hold.
    \begin{itemize}
        \item The $E$-count of $\sigma$ is even if and only if $n$ is odd.
        \item Two lozenge tilings of $T$ have the same perfect matching sign if and only if the sum of the
            $E$-counts of the difference cycles is even.
    \end{itemize}
\end{corollary}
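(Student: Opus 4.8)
The plan is to prove the first assertion first — equivalently, that $n \equiv 1 + (\text{$E$-count of }\sigma) \pmod 2$ — and then to deduce the second assertion from it together with Lemma~\ref{lem:twist-sign}. For the second assertion, write $\pi_2 = \rho\,\pi_1$ for the perfect matching permutations of two tilings $\tau_1,\tau_2$, and decompose $\rho$ into its disjoint nontrivial cycles, of lengths $n_1,\dots,n_r\ (\geq 3)$; these correspond to the difference cycles $\sigma_1,\dots,\sigma_r$. Since twisting disjoint lozenge cycles is independent, Lemma~\ref{lem:twist-sign} gives $\msgn{\tau_2} = \left(\prod_i (-1)^{n_i-1}\right)\msgn{\tau_1}$, so $\tau_1$ and $\tau_2$ share a perfect matching sign if and only if $\sum_i (n_i-1)$ is even. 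By the first assertion, $n_i - 1 \equiv \text{$E$-count}(\sigma_i) \pmod 2$ for each $i$, whence $\sum_i(n_i-1) \equiv \sum_i \text{$E$-count}(\sigma_i)\pmod 2$, and the second assertion follows at once. Thus everything reduces to the first assertion.

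To prove $n \equiv 1 + \text{$E$-count}(\sigma)\pmod 2$, I would induct on the number of punctures of $T$ lying inside $\sigma$, using resolution as the inductive engine. Suppose a puncture $P$ (or the minimal covering region of an overlapping pair) with $E$-count $k$ lies inside $\sigma$, and resolve it relative to $\tau$. By Lemma~\ref{lem:cycle-res}, $\sigma$ becomes an $(n+kl)$-cycle $\sigma'$ with $l$ odd, since $P$ is inside. The resolution removes $P$ from the interior, while the three new corner punctures of $\mathcal{T}_{d+2k}$ lie on the boundary and hence inside no cycle; so the number of interior punctures drops by exactly one and the $E$-count of the cycle drops by exactly $k$. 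Therefore $n' + \text{$E$-count}(\sigma') = (n+kl) + (\text{$E$-count}(\sigma) - k) = n + \text{$E$-count}(\sigma) + k(l-1)$, and since $l-1$ is even the parity of $n + \text{$E$-count}$ is preserved. By the inductive hypothesis, $n' + \text{$E$-count}(\sigma')$ is odd, hence so is $n + \text{$E$-count}(\sigma)$, which is exactly the desired relation for $\sigma$.

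It remains to treat the base case, in which no puncture lies inside $\sigma$; here $\text{$E$-count}(\sigma)=0$ and I must show $n$ is odd. Viewing $\sigma$ as an alternating cycle $C$ of length $2n$ in the honeycomb graph $G(T)$, the curve $C$ bounds a disk whose bounded faces are all hexagons, say $h$ of them (no puncture faces occur, since none is enclosed). Writing $V_{\mathrm{int}}$ and $E_{\mathrm{int}}$ for the numbers of vertices and edges of $G(T)$ strictly inside $C$, counting face-degrees gives $6h = 2E_{\mathrm{int}} + 2n$, while Euler's formula for the disk gives $V_{\mathrm{int}} - E_{\mathrm{int}} + h = 1$; eliminating $E_{\mathrm{int}}$ yields $n = 1 + 2h - V_{\mathrm{int}}$. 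Now $V_{\mathrm{int}}$ is precisely the number of triangles of $T$ strictly inside $\sigma$, and these are matched among themselves by $\tau$ (a lozenge of $\tau$ meeting the interior can use neither a tile of $\sigma$ nor a puncture), so the interior region is balanced and $V_{\mathrm{int}}$ is even. Hence $n$ is odd, as required.

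The routine parts are the sign bookkeeping in the reduction and in the resolution step. The main obstacle is ensuring that the resolution step behaves as claimed with respect to the cycle — specifically that $\sigma$ is a simple closed band bounding a disk, that the interior punctures other than $P$ remain inside $\sigma'$ while the newly created corner punctures stay outside every cycle, and that in the base case $V_{\mathrm{int}}$ really is the balanced set of interior triangles. These are precisely the points where the geometry of the resolution construction and of the honeycomb graph must be invoked with care.
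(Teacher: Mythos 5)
Your proposal is correct, but your proof of the first assertion takes a genuinely different route from the paper's. The reduction of the second assertion to the first is identical in both (disjoint difference cycles, Lemma~\ref{lem:twist-sign}, and the parity identity $n_i-1\equiv E\text{-count}(\sigma_i)\pmod 2$). For the first assertion, the paper resolves \emph{all} non-corner punctures at once --- those inside $\sigma$ (contributing $j_il_i$ with $l_i$ odd) and those outside (contributing $k_im_i$ with $m_i$ even) --- so that the resolved region becomes a hexagon; it then invokes the external result \cite[Theorem~1.2]{CGJL}, that any two tilings of a hexagon are connected by three-cycle twists, to conclude via Lemma~\ref{lem:twist-sign} that the extended cycle $\sigma'$ must have odd length, and transfers the parity back. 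You instead resolve only the punctures \emph{inside} $\sigma$, one at a time by induction, and handle the base case (no enclosed punctures) directly with an Euler-characteristic/face-count argument in $G(T)$: the identities $6h=2E_{\mathrm{int}}+2n$ and $V_{\mathrm{int}}-E_{\mathrm{int}}+h=1$ give $n=1+2h-V_{\mathrm{int}}$, and $V_{\mathrm{int}}$ is even because the strict interior is tiled by lozenges of $\tau$ (your justification that such lozenges cannot straddle the band is sound, as is the observation that a puncture meeting the open disk would have to lie entirely inside it). What each approach buys: yours is self-contained --- it eliminates the dependence on \cite{CGJL} and is essentially the Kasteleyn-style parity argument that the paper only alludes to in a later remark --- while the paper's avoids any planar-graph bookkeeping by leaning on the known hexagon result. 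Both proofs rest on the same implicit geometric facts about resolution, namely Lemma~\ref{lem:cycle-res} and the preservation of the inside/outside status of the remaining punctures (the paper uses this when it sums $j_il_i$ and $k_im_i$ over sequential resolutions), so the caveats you flag at the end are not a gap relative to the paper; they are exactly the points the paper also takes for granted.
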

\begin{proof}
    Suppose $T$ has $a$ punctures and pairs of overlapping punctures, $P_1, \ldots, P_a$, inside $\sigma$ that are
    \emph{not} in a corner, i.e., not associated to $x^k$, $y^k$, or $z^k$, for some $k$.  Let $j_i$ be the
    $E$-count of $P_i$.  Similarly, suppose $T$ has $b$ punctures and pairs of overlapping punctures, $Q_1, \ldots, Q_b$,
    outside $\sigma$ that are \emph{not} in a corner, i.e., not associated to $x^k$, $y^k$, or $z^k$, for some $k$.
    Let $k_i$ be the $E$-count of $Q_i$.

    If we resolve all of the punctures $P_1, \ldots, P_a, Q_1, \ldots, Q_b$, then $\sigma$ is taken to a  cycle
    $\sigma'$.  By Lemma~\ref{lem:cycle-res}, $\sigma'$ has length
    \[
        n' := n + (j_1 l_1 + \cdots + j_a l_a) + (k_1 m_1 + \cdots + k_b m_b),
    \]
    where the integers $l_1, \ldots, l_a$ are odd and the integers $m_1, \ldots, m_b$ are even.

    Denote the region obtained from $T$ by resolving its $a+b$ punctures by $T'$. After merging touching punctures, it becomes a hexagon.  By \cite[Theorem~1.2]{CGJL}, every tiling of $T'$ is thus obtained from any
    other tiling of $T'$ through a sequence of three-cycle twists, as in Figure~\ref{fig:three-cycle}.
    By Lemma~\ref{lem:twist-sign}, such twists do not change the perfect matching sign of the tiling,
    hence $n'$ is an odd integer.

    Since $n'$ is odd, $n' - (k_1 m_1 + \cdots + k_b m_b) = n + (j_1 l_1 + \cdots + j_a l_a)$ is also odd.
    Thus, $n$ is odd if and only if $j_1 l_1 + \cdots + j_a l_a$ is even.  Since the integers $l_1, \ldots, l_a$ are odd,
    we see that $j_1 l_1 + \cdots + j_a l_a$ is even if and only if an even number of the $l_i$ are odd, i.e., the sum
    $l_1 + \cdots + l_a$ is even.  Notice that this sum is the $E$-count of $\sigma$. Thus, claim (i) follows.

    Suppose two tilings $\tau_1$ and $\tau_2$ of $T$ have difference cycles $\sigma_1, \ldots, \sigma_p$.  Then
    by Lemma~\ref{lem:twist-sign}, $\msgn{\tau_2} = \sgn{\sigma_1} \cdots \sgn{\sigma_p} \msgn{\tau_1}$.  By claim (i), $\sigma_i$
    is a cycle of odd length if and only if the $E$-count of $\sigma_i$ is even.  Thus, $\sgn{\sigma_1} \cdots \sgn{\sigma_p} = 1$ if and only if
    an even number of the $\sigma_i$ have an odd $E$-count.  An even number of the $\sigma_i$ have an odd $E$-count if and only
    if the sum of the $E$-counts of $\sigma_1, \ldots, \sigma_p$ is even.  Hence, claim (ii) follows.
\end{proof}


Next, we describe the change of a lattice path permutation when  twisting a cycle of lozenges.  To this end  we single out certain punctures. We recursively define a puncture of $T \subset \mathcal{T}_d$ to be
a \emph{non-floating} puncture if it  touches the boundary of $ \mathcal{T}_d$ or if it overlaps or touches a non-floating puncture of $T$. Otherwise we call a puncture
a \emph{floating} puncture.

We also distinguish between \emph{preferred} and \emph{acceptable directions} on the splitting chains used for resolving
a puncture. Here we use again the perspective of a particle that starts at a vertex of the puncture and moves on a chain
to the corresponding corner vertex of $\mathcal{T}_d$. Our convention is:

\begin{itemize}
    \item On the lower-left chain the preferred direction are Southwest and West, the acceptable directions are Northwest and Southeast.
    \item On the lower-right chain the preferred directions are Southeast and East, the acceptable directions are Northeast and Southwest.
    \item On the top chain the preferred directions are Northeast and Northwest, the acceptable directions are East and West.
\end{itemize}

\begin{lemma} \label{lem:lpsgn-cycle}
    Let $\tau$ be a lozenge tiling of $T = T_d(I)$,  and let $\sigma$ be a cycle of lozenges in $\tau$.  Then the lattice
    path signs of $\tau$ and the twist of $\sigma$ in $\tau$ are the same if and only if the $E$-count of $\sigma$ is even.
\end{lemma}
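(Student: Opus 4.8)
The plan is to track how the lattice-path permutation $\lambda$ of Definition~\ref{def:nilp-sign} changes when the cycle $\sigma$ is twisted, and to show that this change is a permutation of sign $(-1)^{k}$, where $k$ denotes the $E$-count of $\sigma$. Since $\lpsgn$ is the signature of $\lambda$, writing $\tau'$ for the twist I would then have $\lpsgn{\tau'} = (-1)^{k}\,\lpsgn{\tau}$, and the claimed equivalence is immediate: the two signs agree precisely when $k$ is even.

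First I would fix the dictionary between lozenges and path steps: of the three orientations of a lozenge, one carries a Southeast step of a path in $L(T)$, one carries an East step, and the third carries no step at all, its two unit triangles sharing a single lattice vertex. Under this dictionary the sinks $E_1,\dots,E_m$ lie along the upper-left edges of the punctures, so that ``$E_j$ inside $\sigma$'' is unambiguous and the number of sinks enclosed by $\sigma$ is exactly the $E$-count $k$ (this is the reformulation of the $E$-count recalled just before the lemma). Because $\tau$ and $\tau'$ differ only on the lozenges of $\sigma$, the families $\Lambda$ and $\Lambda'$ differ only inside the region bounded by $\sigma$; hence $\lambda' = \mu\lambda$ for a permutation $\mu$ supported on the endpoints of those paths that meet $\sigma$, and it suffices to show $\sgn{\mu} = (-1)^{k}$.

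To compute $\sgn{\mu}$ I would superimpose $\Lambda$ and $\Lambda'$ and examine their edge-wise symmetric difference, which is the single closed curve traced by $\sigma$. Reading the reconnection of paths off this one loop exhibits $\mu$ as a single cyclic permutation of sink-indices, and the sinks it moves are the $k$ sinks enclosed by $\sigma$ together with exactly one further sink met by the loop from outside. Thus $\mu$ is a cycle of length $k+1$, so $\sgn{\mu}=(-1)^{k}$, as needed. As a consistency check, combining this with Lemma~\ref{lem:twist-sign} and the first part of Corollary~\ref{cor:msgn-cycle} (where $k$ is even exactly when $n$ is odd) shows that $\lpsgn$ changes by $(-1)^{n-1}$ under the twist, precisely as $\msgn$ does.

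The main obstacle is the boundary bookkeeping in this superposition step: verifying rigorously that the symmetric difference is a \emph{single} loop and that it meets exactly $k+1$ sinks, i.e.\ that $\sgn{\mu}=(-1)^{k}$. The delicate cases are punctures of side length greater than one, which contribute several collinear sinks along one edge, and minimal covering regions of overlapping punctures, where the forced immutable lozenges must be excluded from $\sigma$ exactly as in the proof of Lemma~\ref{lem:cycle-res}. To organize these cases I expect the preferred and acceptable directions fixed just above the lemma to be the right tool, since they pin down how the splitting chains, and hence the path arcs along $\sigma$, may cross the loop; if a uniform treatment is still awkward, one can resolve the enclosed punctures to reduce to a configuration in which every puncture sits in a corner and the enclosed-sink count is transparent, using Lemma~\ref{lem:cycle-res} to control the passage of $\sigma$ to the $(n+kl)$-cycle $\sigma'$.
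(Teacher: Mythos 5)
Your reduction is set up correctly: it suffices to show that the rewiring permutation $\mu$ with $\lambda' = \mu\lambda$ has sign $(-1)^k$, your dictionary between the three lozenge orientations and path steps is right, and the edge-wise symmetric difference of $\Lambda$ and $\Lambda'$ is indeed the single closed curve traced by $\sigma$. The genuine gap is the next step: the assertion that ``reading the reconnection of paths off this one loop exhibits $\mu$ as a single cyclic permutation'' of length $k+1$. That claim is not a consequence of the symmetric difference being one loop; it is essentially the entire content of the lemma, and nothing in the sketch proves it. The permutation $\mu$ is determined not by the loop alone but by how the \emph{unchanged} path segments attach to it: each exit vertex of $\sigma$ carries a fixed outgoing segment, which may end at a sink strictly inside the enclosed region (an enclosed $E_j$ need not lie \emph{on} the loop at all, e.g.\ when a puncture sits well inside $\sigma$), may end at a sink outside, or may return to the loop (a single path can cross $\sigma$ several times, so many outside sinks are a priori involved, not ``exactly one''). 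Twisting re-glues the entering paths to these segments shifted by one notch around the loop, and whether this re-gluing yields one long cycle rather than a product of several disjoint cycles --- whose sign need not be $(-1)^k$ --- depends on that global attachment pattern. One can write down attachment data satisfying the obvious local and planarity constraints for which $\mu$ splits or even fixes the enclosed sinks; such configurations are excluded only by genuinely geometric facts about monotone (East/Southeast), non-crossing paths and lozenge cycles, of the same depth as the parity statement in Corollary~\ref{cor:msgn-cycle}, which the paper itself can prove only via resolutions and the theorem of \cite{CGJL}. So the superposition picture, as described, assumes precisely what has to be proved.

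Your fallback --- resolve the enclosed punctures first, using Lemma~\ref{lem:cycle-res} --- is in essence the paper's actual strategy, but it is not a routine patch that rescues the superposition argument. The paper inducts on the number of floating punctures, resolves one carefully chosen puncture $P$ (the one covering the smallest monomial label, so that no floating puncture lies to its lower-right), and must then track how resolution itself distorts the lattice path permutation: the sign changes by $(-1)^{js}$, where $s$ is the side length of $P$ and $j$ counts the splitting-chain edges in acceptable directions, and the decisive parity fact is that the number $c$ of chain edges requiring adjustment under the twist is odd exactly when $P$ lies inside $\sigma$. This bookkeeping, invisible in the superposition picture, is where the proof actually lives; deferring to it does not close the gap in your main argument.
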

\begin{proof}
    Suppose $T$ has $n$  floating punctures.  We proceed by induction on $n$ in five steps.

    \emph{Step $1$: The base case.}

    If $n = 0$, then every tiling of $T$ induces the same bijection $\{A_1,\ldots,A_m\} \to \{E_1,\ldots,E_m\}$. Thus, all tilings have the same lattice path sign.  Since $T$ has no
    floating punctures, $\sigma$  has an $E$-count of zero.  Hence, the claim is true if $n=0$.

    \emph{Step $2$: The set-up.}

    Suppose now that $n > 0$, and choose $P$ among the floating punctures and the minimal covering regions of two
    overlapping floating punctures of $T$ as the one that covers the upward-pointing unit triangle of $\mathcal{T}_d$
    with the smallest monomial label. Let $s > 0$ be the side length of $P$, and let $k$ be the $E$-count of $\sigma$.
    Furthermore, let $\upsilon$ be the lozenge tiling of $T$ obtained as twist of $\sigma$ in $\tau$. Both, $\tau$ and
    $\upsilon$, induce bijections $\{A_1,\ldots,A_m\} \to \{E_1,\ldots,E_m\}$, and we denote by $\lambda \in \PS_m$ and
    $\mu \in \PS_m$ the corresponding lattice path permutations, respectively. We have to show $\lpsgn \tau = (-1)^k \lpsgn \upsilon$, that is,
    \[
        \sgn \lambda = (-1)^k \sgn \mu.
    \]

    \emph{Step $3$: Resolutions.}

    We resolve $P$ relative to the tilings $\tau$ and $\upsilon$, respectively. For the resolution of $P$ relative to
    $\tau$, choose the splitting chains so that each unit edge has a preferred direction, except possibly the unit edges on
    the boundary of a puncture of $T$;  this is always possible. By our choice of $P$, no other floating punctures are to the
    lower-right of $P$. It follows that no edge on the lower-right chain crosses a lattice path, except possibly at the end
    of the lattice path.

    For the resolution of $P$ relative to $\upsilon$, use the splitting chains described in the previous paragraph, except
    for the edges that cross the lozenge cycle $\sigma$. They have to be adjusted since these unit edges disappear when
    twisting $\sigma$. We replace each such unit edge by a unit edge in an acceptable direction followed by a unit edge in a
    preferred direction so that the result has the same starting and end point as the unit edge they replace. Note that this
    is always possible and that this determines the replacement uniquely. The new chains meet the requirements on splitting
    chains.

    Using these splitting chains we resolve the puncture $P$ relative to $\tau$ and $\upsilon$, respectively. The result is
    a triangular region $T' \subset \mathcal{T}_{d+2s}$ with induced tilings $\tau'$ and $\upsilon'$, respectively. Denote
    by $\sigma'$ the extension of the cycle $\sigma$ in $T'$ (see Lemma~\ref{lem:cycle-res}). Since $\tau$ and $\upsilon$
    differ exactly on the cycle $\sigma$ and the splitting chains were chosen to be the same except on $\sigma$, it follows
    that twisting $\sigma'$ in $\tau'$ results in the tiling $\upsilon'$ of $T'$.

      \begin{figure}[!ht]
        \includegraphics[scale=1]{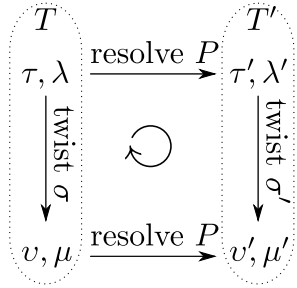}
        \caption{The commutative diagram used in the proof of Lemma~\ref{lem:lpsgn-cycle}.}
        \label{fig:res-comm-diag}
    \end{figure}

    \emph{Step $4$: Lattice path permutations.}

    Now we compare the signs of $\lambda, \mu \in \PS_m$ with the signs of  $\lambda'$ and $\mu'$, the lattice path permutations
    induced by the tilings $\tau'$ and $\upsilon'$ of $T'$, respectively.

    First, we compare the starting and end points of lattice paths in $T$ and $T'$. Resolution of the puncture identifies
    each starting and end point in $T$ with one such point in $T'$. We refer to these points as the \emph{old} starting and
    end points in $T'$. Note that the end points on the puncture $P$ correspond to the end points on the puncture in the
    Southeast corner of $T'$. The starting points in $T$ that are on one of the splitting chains used for resolving $P$
    relative to $\tau$ and $\upsilon$ are the same. Assume there are $t$ such points. After resolution, each point gives
    rise to a new starting and end point in $T'$. Both are connected by a lattice path that is the same in both resolutions
    of $P$. Hence, in order to compare the signs of the permutations $\lambda'$ and $\mu'$ on $m+t$ letters, it is enough to
    compare the lattice paths between the old starting and end points in both resolutions.

    Retain for these points the original labels used in $T$. Using this labeling, the lattice paths induce permutations
    $\tilde{\lambda}$ and $\tilde{\mu}$ on $m$ letters. Again, this is the same process in both resolutions. It follows that
    \begin{equation}\label{eq:compare-res-signs}
        \sgn (\tilde{\lambda}) \cdot \lpsgn (\tau') = \sgn (\tilde{\mu}) \cdot \lpsgn (\upsilon').
    \end{equation}

    Assume now that $P$ is a puncture. Then the end points on $P$ are indexed by $s$ consecutive integers. Since we retain
    the labels, the same indices label the end points on the puncture in the Southeast corner of $T'$. The end points on $P$
    correspond to the points in $T'$ whose labels are obtained by multiplying by $x^s y^s$. Consider now the case, where all
    edges in the lower-right splitting chain in $T$ are in preferred directions. Then the lattice paths induced by $\tau'$
    connect each point in $T'$ that corresponds to an end point on $P$ to the end point in the Southeast corner of $T'$ with
    the same index. Thus, $\sgn (\lambda) = \sgn (\tilde{\lambda})$. Next, assume that there is exactly one edge in
    acceptable direction on the lower-right splitting chain of $T$. If this direction is Northeast, then the $s$ lattice
    paths passing through the points in $T'$ corresponding to the end points on $P$ are moved one unit to the North. If the
    acceptable direction was Southwest, then the edge in this direction leads to a shift of these paths by one unit to the
    South. In either case, this shift means that the paths in $T$ and $T'$ connect to end points that differ by $s$
    transpositions, so $\sgn (\tilde{\lambda})= (-1)^{s} \sgn (\lambda)$. More generally, if $j$ is the number of unit edges
    on the lower-right splitting chain of $T$ that are in acceptable directions, then
    \[
        \sgn (\tilde{\lambda}) = (-1)^{js} \sgn (\lambda).
    \]

    Next, denote by $c$ the number of unit edges on the lower-right splitting chain that have to be adjusted when twisting
    $\sigma$. Since each of these edges is replaced by an edge in a preferred and one edge in an acceptable direction, after
    twisting the lower-right splitting chain in $T$ has exactly $j+c$ unit edges in acceptable directions. It follows as
    above that
    \[
        \sgn (\tilde{\mu}) = (-1)^{(j+c)s} \sgn (\mu).
    \]
    Since a unit edge on the splitting chain has to be adjusted when twisting if and only if it is shared by two
    consecutive lozenges in the cycle $\sigma$, the number $c$ is even if and only if the puncture $P$ is outside
    $\sigma$.

   Moreover, as the puncture $P$ has been resolved in $T'$, we conclude by induction that $\tau'$ and $\upsilon'$ have
   the same lattice path sign if and only if the $E$-count of $\sigma'$ is even. Thus, we get
    \begin{equation}
        \lpsgn (\upsilon') =
        \begin{cases}
            (-1)^{k-s} \lpsgn (\tau') & \text{if $P$ is inside $\sigma$}, \\
            (-1)^{k} \lpsgn (\tau') & \text{if $P$ is outside $\sigma$}.
        \end{cases}
    \end{equation}

    \emph{Step $5$: Bringing it all together.}

    We consider the two cases separately:
    \begin{enumerate}
        \item Suppose $P$ is inside $\sigma$.  Then $c$ is odd. Hence,  the above considerations imply
            \begin{equation*}
                \begin{split}
                    \sgn (\lambda) & =  (-1)^{js}\sgn (\tilde{\lambda}) =  (-1)^{js + k-s}\sgn (\tilde{\mu}) \\
                                   & =  (-1)^{js + k-s + (j+c) s} \sgn ({\mu}) \\
                                   & = (-1)^k  \sgn ({\mu}),
                \end{split}
            \end{equation*}
            as desired.

        \item Suppose $P$ is outside of $\sigma$.  Then $c$ is even, and we conclude
            \begin{equation*}
                \begin{split}
                    \sgn (\lambda) & =  (-1)^{js}\sgn (\tilde{\lambda}) =  (-1)^{js + k}\sgn (\tilde{\mu}) \\
                                   & =  (-1)^{js + k-s + (j+c) s} \sgn ({\mu}) \\
                                   & = (-1)^k  \sgn ({\mu}).
                 \end{split}
            \end{equation*}
    \end{enumerate}

    Finally, it remains to consider the case where $P$ is the minimal covering region of two overlapping punctures of
    $T$. Let $\hat{T}$ be the triangular region that differs from $T$ only by having $P$ as a puncture, and let
    $\hat{\tau}$ and $\hat{\upsilon}$ be the tilings of $\hat{T}$ induced by $\tau$ and $\upsilon$, respectively. Since
    we order the end points of lattice paths using monomial labels, it is possible that the indices of the end points on
    the Northeast boundary of $P$ in $\tilde{T}$ differ from those of the points on the Northeast boundary of the
    overlapping punctures in $T$. However, the lattice paths induced by $\tau$ and $\upsilon$ connecting the points on
    the Northeast boundary of $P$ to the points on the Northeast boundary of the overlapping punctures are the same.
    Hence the lattice paths sign of $\tau$ and $\hat{\tau}$ differ in the same ways as the signs of $\upsilon$ and
    $\hat{\upsilon}$. Since we have shown our assertion for $\hat{\tau}$ and $\hat{\upsilon}$, it also follows for
    $\tau$ and $\upsilon$.
\end{proof}

Using difference cycles, we now characterise when two tilings of a region have the same lattice path sign.

\begin{corollary} \label{cor:lpsgn-cycle}
    Let $T = T_d(I)$ be a non-empty, balanced triangular region. Then two tilings of $T$ have the same lattice path sign
    if and only if the sum of the $E$-counts (which may count some end points $E_j$ multiple times) of the difference cycles
    is even.
\end{corollary}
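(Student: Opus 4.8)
The plan is to reduce the statement about two arbitrary tilings to the single-cycle result of Lemma~\ref{lem:lpsgn-cycle} by decomposing the difference between the tilings into difference cycles and applying the lemma one cycle at a time. First I would recall the setup for difference cycles introduced just before Corollary~\ref{cor:msgn-cycle}: given two tilings $\tau_1$ and $\tau_2$ of $T$ with lattice path permutations $\lambda_1$ and $\lambda_2$, the difference is captured by writing $\lambda_2 \lambda_1^{-1}$ (or the analogous relation on perfect matching permutations) as a product of disjoint cycles of length at least two, and each such factor corresponds to a cycle of lozenges $\sigma_1, \ldots, \sigma_p$ in $\tau_1$. The key structural fact is that twisting all of these difference cycles transforms $\tau_1$ into $\tau_2$, so I can pass from $\tau_1$ to $\tau_2$ through a sequence of single-cycle twists.

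The heart of the argument is then a telescoping application of Lemma~\ref{lem:lpsgn-cycle}. Each individual twist of a difference cycle $\sigma_i$ multiplies the lattice path sign by $(-1)^{k_i}$, where $k_i$ is the $E$-count of $\sigma_i$; indeed the lemma states exactly that the lattice path signs before and after twisting $\sigma_i$ agree if and only if $k_i$ is even. Composing these single-cycle changes, I obtain
\[
    \lpsgn(\tau_2) = (-1)^{k_1 + \cdots + k_p}\, \lpsgn(\tau_1).
\]
Therefore $\lpsgn(\tau_1) = \lpsgn(\tau_2)$ if and only if $k_1 + \cdots + k_p$ is even, which is precisely the assertion that the sum of the $E$-counts of the difference cycles is even. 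This mirrors the structure of the proof of part~(ii) of Corollary~\ref{cor:msgn-cycle}, with Lemma~\ref{lem:lpsgn-cycle} playing the role that Corollary~\ref{cor:msgn-cycle}(i) and Lemma~\ref{lem:twist-sign} played there.

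The one point requiring care — and the step I expect to be the main obstacle — is that Lemma~\ref{lem:lpsgn-cycle} is stated for twisting a \emph{single} cycle $\sigma$ in a fixed tiling $\tau$, whereas here I must apply it sequentially to a chain of intermediate tilings obtained after twisting $\sigma_1, \ldots, \sigma_{i-1}$. I would verify that the difference cycles are disjoint as collections of lozenges (since they arise from disjoint cycles of the permutation), so that twisting one does not disturb the others, and that each $\sigma_i$ remains a genuine lozenge cycle in the intermediate tiling with the same $E$-count. The $E$-count of $\sigma_i$ depends only on which punctures lie inside $\sigma_i$, a geometric datum unchanged by twists of the other, disjoint cycles; this justifies applying Lemma~\ref{lem:lpsgn-cycle} at each stage with the same value $k_i$. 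The parenthetical caveat in the statement, that the $E$-count may count some endpoints $E_j$ multiple times, is accounted for automatically since each $k_i$ is computed as the $E$-count of its own cycle and these counts are simply summed.
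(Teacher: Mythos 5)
Your proposal is correct and follows essentially the same route as the paper: the paper's proof likewise decomposes the difference of the two tilings into difference cycles and applies Lemma~\ref{lem:lpsgn-cycle} to each, concluding that the signs agree if and only if an even number of the cycles have odd $E$-count, equivalently the sum of the $E$-counts is even. Your explicit attention to the disjointness of the difference cycles and the invariance of each $E$-count under twists of the other cycles is a sound elaboration of a step the paper leaves implicit, not a departure from its argument.
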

\begin{proof}
    Suppose two tilings $\tau_1$ and $\tau_2$ of $T$ have difference cycles $\sigma_1, \ldots, \sigma_p$.
    By Lemma~\ref{lem:lpsgn-cycle}, $\lpsgn{\tau_1} = \lpsgn{\tau_2}$ if and only if an even number of the $\sigma_i$ have an odd $E$-count.
    The latter is equivalent to the sum of the $E$-counts of $\sigma_1, \ldots, \sigma_p$ being even.
\end{proof}

Our above results imply that the two signs that we assigned to a given lozenge tiling, the perfect matching sign (see
Definition~\ref{def:pm-sign}) and the lattice path sign (see Definition~\ref{def:nilp-sign}), are the same up to a
scaling factor depending only on $T$. The main result of this section follows now easily.

\begin{theorem} \label{thm:detZN}
    Let $T = T_d(I)$ be a balanced triangular region.  The following statements hold.
    \begin{enumerate}
        \item Let $\tau$ and $\tau'$ be two lozenge tilings of $T$. Then their perfect matching signs are
            the same if and only if their lattice path signs are the same, that is,
            \[
                \msgn (\tau) \cdot \lpsgn (\tau) = \msgn (\tau') \cdot \lpsgn (\tau').
            \]
        \item In particular, we have that
            \[
                |\det{Z(T)}| = |\det{N(T)}|.
            \]
    \end{enumerate}
\end{theorem}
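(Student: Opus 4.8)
The plan is to deduce Theorem~\ref{thm:detZN} almost immediately from the two characterizations already established, namely Corollary~\ref{cor:msgn-cycle}(ii) and Corollary~\ref{cor:lpsgn-cycle}. The key observation is that both corollaries express the relevant sign-agreement condition in terms of exactly the same combinatorial quantity: two tilings share the same perfect matching sign if and only if the sum of the $E$-counts of their difference cycles is even, and likewise for the lattice path sign. First I would fix two tilings $\tau$ and $\tau'$ of $T$ and let $\sigma_1, \ldots, \sigma_p$ be their difference cycles. By Corollary~\ref{cor:msgn-cycle}(ii), the equality $\msgn(\tau) = \msgn(\tau')$ holds if and only if $\sum_i (\text{$E$-count of }\sigma_i)$ is even. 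By Corollary~\ref{cor:lpsgn-cycle}, the equality $\lpsgn(\tau) = \lpsgn(\tau')$ holds under the very same parity condition. Therefore the two sign-equalities hold simultaneously, which is precisely the statement that $\msgn(\tau)\cdot\lpsgn(\tau) = \msgn(\tau')\cdot\lpsgn(\tau')$, since a product of two $\pm 1$ signs is preserved exactly when both factors change together or both stay fixed. This proves part~(i).

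For part~(ii), I would use part~(i) to conclude that the quantity $\varepsilon := \msgn(\tau)\cdot\lpsgn(\tau) \in \{+1,-1\}$ is independent of the choice of tiling $\tau$; call its common value $\varepsilon(T)$. This means $\lpsgn(\tau) = \varepsilon(T)\cdot\msgn(\tau)$ for every tiling $\tau$ of $T$. Summing over all tilings and invoking Theorem~\ref{thm:pm-matrix} and Theorem~\ref{thm:nilp-matrix}, which identify $\sum_\tau \msgn(\tau) = \det Z(T)$ and $\sum_\tau \lpsgn(\tau) = \det N(T)$, I would obtain
\[
    \det N(T) = \sum_{\tau} \lpsgn(\tau) = \varepsilon(T) \sum_{\tau} \msgn(\tau) = \varepsilon(T) \cdot \det Z(T).
\]
Taking absolute values and using $|\varepsilon(T)| = 1$ yields $|\det N(T)| = |\det Z(T)|$, as claimed.

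I do not anticipate any serious obstacle in this final argument, as the genuine combinatorial work has already been carried out in the two corollaries and in the supporting Lemmas~\ref{lem:twist-sign}, \ref{lem:cycle-res}, and~\ref{lem:lpsgn-cycle}. The one point requiring a small amount of care is the implicit appeal to the connectivity result from \cite[Theorem~1.2]{CGJL} (used inside the proof of Corollary~\ref{cor:msgn-cycle}), which guarantees that any two tilings of $T$ are indeed related by a sequence of cycle twists, so that the notion of difference cycles is well-defined and the sign comparisons in both corollaries genuinely apply to an arbitrary pair of tilings. Assuming a tiling exists (so that the determinants are being compared for a nonempty set of tilings; if $T$ has no tiling both determinants vanish and the identity is trivial), the scaling factor $\varepsilon(T)$ is well-defined and the argument goes through verbatim. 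The heart of the matter is thus the recognition that the two corollaries share a common parity criterion, after which part~(ii) is a routine summation.
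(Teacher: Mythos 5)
Your proof is correct and takes essentially the same approach as the paper: both deduce part (i) from the observation that Corollaries~\ref{cor:msgn-cycle} and~\ref{cor:lpsgn-cycle} express agreement of the two signs by the identical parity condition on the $E$-counts of the difference cycles, and then obtain part (ii) by summing the resulting constant sign ratio $\varepsilon(T)$ over all tilings via Theorems~\ref{thm:pm-matrix} and~\ref{thm:nilp-matrix}. One minor point: your caveat about \cite[Theorem~1.2]{CGJL} is not actually needed at this stage, since difference cycles are well-defined for an arbitrary pair of tilings directly by factoring the permutation $\rho$ with $\pi_2 = \rho\pi_1$ into disjoint cycles; that citation is used only inside the proof of Corollary~\ref{cor:msgn-cycle}, where the region has already been resolved to a hexagon.
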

\begin{proof}
    Consider two lozenge tilings of $T$. According to Corollaries~\ref{cor:msgn-cycle} and~\ref{cor:lpsgn-cycle}, they have
    the same perfect matching and the same lattice path signs if and only if the sum of the $E$-counts of the difference
    cycles is even. Hence using Theorems~\ref{thm:pm-matrix} and~\ref{thm:nilp-matrix}, it follows that $|\det{Z(T)}| =
    |\det{N(T)}|$.
\end{proof}

Theorem \ref{thm:detZN} allows us to move freely between the points of view using lozenge tilings, perfect matchings, and families
of non-intersecting lattice paths, as needed. In particular, it implies that rotating a triangular region by
$120^{\circ}$ or $240^{\circ}$ does not change the enumerations. Thus, for example, the three matrices described in
Remark~\ref{rem:rotations} as well as the matrix given in Example~\ref{exa:Z-matrix} all have the same determinant, up
to sign.

\subsection{A single sign}\label{sub:singlesign}~\par

We exhibit triangular regions such that all lozenge tilings have the same sign, that is, the signed and the unsigned enumerations are the same.  This is guaranteed to happen if all
floating punctures (see the definition preceding Lemma~\ref{lem:lpsgn-cycle}) have an even side length.

\begin{corollary} \label{cor:same-sign}
    Let $T$ be a tileable triangular region, and suppose all floating punctures of $T$ have an even side length. Then
    every lozenge tiling of $T$ has the same perfect matching sign as well as the same lattice path sign, and so
    $\per{Z(T)} = |\det{Z(T)}|$.

    In particular, simply-connected regions that are tileable have this property.
\end{corollary}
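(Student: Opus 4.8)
The plan is to deduce Corollary~\ref{cor:same-sign} from the characterizations of equal signs already established in Corollaries~\ref{cor:msgn-cycle} and~\ref{cor:lpsgn-cycle}, together with Theorem~\ref{thm:detZN}. The heart of the matter is to show that, under the hypothesis that every floating puncture has even side length, \emph{any} difference cycle $\sigma$ between two tilings of $T$ has an even $E$-count. Once that is known, Corollary~\ref{cor:lpsgn-cycle} (respectively Corollary~\ref{cor:msgn-cycle}) immediately gives that all tilings share a common lattice path sign (respectively perfect matching sign), since the sum of the $E$-counts of the difference cycles is then a sum of even integers, hence even. The equality $\per Z(T) = |\det Z(T)|$ then follows because $\per Z(T)$ counts all tilings (Proposition~\ref{pro:per-enum}) while $\det Z(T) = \sum_\tau \msgn(\tau)$ (Theorem~\ref{thm:pm-matrix}); if every $\msgn(\tau)$ is the same fixed value $\pm 1$, the determinant is $\pm$ the number of tilings, giving $|\det Z(T)| = \per Z(T)$.

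First I would recall from the definition of $E$-count preceding Corollary~\ref{cor:msgn-cycle} that the $E$-count of a cycle $\sigma$ equals the sum of the side lengths of the non-overlapping punctures inside $\sigma$ plus the sum of the side lengths of the minimal covering regions of pairs of overlapping punctures inside $\sigma$. The key structural observation is that a cycle of lozenges $\sigma$ can only surround \emph{floating} punctures: a non-floating puncture is, by definition, connected through overlaps and touchings to the boundary of $\mathcal{T}_d$, so it cannot be fully enclosed by a closed cycle of lozenges lying inside $T$. Hence every puncture (or overlapping pair) contributing to the $E$-count of $\sigma$ is a floating puncture (or a covering region of floating punctures). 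By hypothesis each such floating puncture has even side length, so each individual contribution to the $E$-count is even, and therefore the total $E$-count of $\sigma$ is even. I would want to verify the case of the covering region of two overlapping floating punctures, confirming that the relevant side length used in the $E$-count is again even under the hypothesis.

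The main obstacle I anticipate is precisely this last point: making rigorous the claim that a cycle of lozenges cannot enclose a non-floating puncture, and handling the $E$-count contribution of overlapping-puncture covering regions correctly. The definition of non-floating is recursive (touching or overlapping chains reaching the boundary), so I would argue that if $\sigma$ enclosed a non-floating puncture $P$, following the chain of touchings/overlaps from $P$ out to the boundary of $\mathcal{T}_d$ would force $\sigma$ to cross that chain, contradicting that the enclosed region is bounded strictly inside $\sigma$. For the covering-region contribution, I would note that the $E$-count definition already accounts for it as a single side length, and I would need the hypothesis to control its parity; the cleanest route is to observe that the side length of the minimal covering region is determined by the positions of the two floating punctures and, combined with their even side lengths, yields an even contribution.

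Finally, for the last sentence of the corollary, I would invoke that a simply-connected tileable triangular region has a polygonal boundary with no isolated interior punctures, so it has \emph{no} floating punctures at all; every puncture touches the boundary and is therefore non-floating. The hypothesis ``all floating punctures have even side length'' is then vacuously satisfied, and the conclusion applies. This gives the well-known fact for such regions, including hexagons, that the permanent and the absolute value of the determinant of $Z(T)$ coincide.
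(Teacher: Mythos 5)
Your overall route is the same as the paper's: the published proof is essentially a two\-/sentence appeal to Corollary~\ref{cor:msgn-cycle} for the perfect matching signs, Corollary~\ref{cor:lpsgn-cycle} for the lattice path signs, and Theorem~\ref{thm:pm-matrix} for $\per Z(T)=|\det Z(T)|$, with the simply-connected case dismissed because such regions have no floating punctures. The substance you add---that a difference cycle can only enclose \emph{floating} punctures, since a non-floating puncture is joined to the boundary of $\mathcal{T}_d$ by a chain of touchings and overlaps that a closed ring of lozenges inside $T$ cannot cross---is precisely the step the paper leaves implicit, and your argument for it is sound; your treatment of the simply-connected case also matches the paper.

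However, the step you yourself flagged as the main obstacle is a genuine gap, and the patch you propose is false. If two overlapping punctures have side lengths $s_1,s_2$ and their intersection is a triangle of side length $t\geq 1$, the minimal covering region has side length $s_1+s_2-t$, so evenness of $s_1$ and $s_2$ does \emph{not} force an even $E$-count contribution: two side-$2$ punctures overlapping in a unit triangle give a covering region of side length $3$. This is not a contrived configuration---in the paper's own running example $T_8(x^7,y^7,z^6,xy^4z^2,x^3yz^2,x^4yz)$, the overlapping punctures $x^3yz^2$ and $x^4yz$ each have side length $2$ while their minimal covering region $x^3yz$ has side length $3$ (this is exactly the source of the $E$-count $1+3=4$ computed after the definition of $E$-count). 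A cycle surrounding such a floating pair has odd $E$-count, so twisting it reverses both signs by Corollaries~\ref{cor:msgn-cycle} and~\ref{cor:lpsgn-cycle}; planting such a pair as a floating configuration in a large tileable region all of whose individual floating punctures have side length $2$ therefore produces tilings of opposite sign. So the hypothesis cannot do the work you ask of it as literally stated: one must read it as in Corollary~\ref{cor:same-sign-shadow}, namely that minimal covering regions of overlapping floating punctures also have even side length, and under that reading every contribution to the $E$-count of a difference cycle is even and your argument goes through. To be fair, the paper's own proof is equally silent on the overlapping case, so this defect is inherited from the statement rather than introduced by you; but your specific claim that the positions and even side lengths of the two punctures alone ``yield an even contribution'' is not salvageable.
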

\begin{proof}
    The equality of the perfect matching signs follows from Corollary~\ref{cor:msgn-cycle}, and the equality of the lattice
    path signs from Corollary~\ref{cor:lpsgn-cycle}. Now Theorem~\ref{thm:pm-matrix} implies $\per{Z(T)} = |\det{Z(T)}|$.

    The second part is immediate as simply-connected regions have no floating punctures.
\end{proof}

\begin{remark}
    The above corollary vastly extends \cite[Theorem~1.2]{CGJL}, where hexagons are considered, using a different approach.
    This special case was also established independently in \cite[Section~3.4]{Ke}, with essentially the same proof as~\cite{CGJL}.

    Corollary~\ref{cor:same-sign} can also be derived from Kasteleyn's theorem on enumerating perfect
    matchings~\cite{Ka}. To see this, notice that in the case, where all floating punctures have even side lengths, all
    ``faces'' of the bipartite graph $G(T)$ have size congruent to $2 \pmod{4}$.
\end{remark}

We now extend Corollary~\ref{cor:same-sign}. To this end we define the \emph{shadow} of a puncture to be the region of
$T$ that is both below the puncture and to the right of the line extending from the upper-right edge of the puncture.
See Figure~\ref{fig:puncture-shadow}.

\begin{figure}[!ht]
    \includegraphics[scale=1]{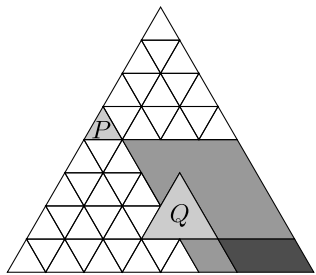}
    \caption{The puncture $P$ has the puncture $Q$ in its shadow (light grey), but $Q$ does not have a puncture in its shadow (dark grey).}
    \label{fig:puncture-shadow}
\end{figure}

\begin{corollary}\label{cor:same-sign-shadow}
    Let $T = T_d(I)$ be a balanced triangular region. If all floating punctures (and minimal covering regions of
    overlapping punctures) with other punctures in their shadows have even side length, then any two lozenge tilings of
    $T$ have the same perfect matching and the same lattice path sign. Thus, $\per{Z(T)} = |\det{Z(T)}|$.
\end{corollary}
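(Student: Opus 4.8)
The plan is to leverage the machinery already established, principally Corollaries~\ref{cor:msgn-cycle} and~\ref{cor:lpsgn-cycle}, together with the same logical structure used to prove Corollary~\ref{cor:same-sign}. The goal is to show that the sum of the $E$-counts of the difference cycles between any two tilings is always even. Since both the perfect matching sign and the lattice path sign are governed by exactly this parity (by the two corollaries above), establishing this single parity fact simultaneously yields that any two tilings share both signs, and then Theorem~\ref{thm:pm-matrix} gives $\per{Z(T)} = |\det{Z(T)}|$.

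First I would reduce to analyzing the $E$-count of a single difference cycle $\sigma$. Recall that the $E$-count of $\sigma$ is the sum of the side lengths of the punctures (and minimal covering regions) lying inside $\sigma$. The key observation is that punctures of \emph{even} side length contribute evenly and are therefore harmless; so the parity of the $E$-count is controlled entirely by the \emph{odd}-side-length punctures inside $\sigma$. The hypothesis guarantees that every floating puncture (or minimal covering region) of odd side length has \emph{no} other punctures in its shadow. The crux of the argument is then a geometric claim: a cycle of lozenges $\sigma$ in a tiling cannot enclose an odd-side-length puncture $P$ with empty shadow in isolation — enclosing such a $P$ forces $\sigma$ to also enclose compensating structure, so that the total $E$-count remains even. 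I would make this precise by examining the shadow region: the condition ``no punctures in the shadow'' means the region below-and-right of $P$ is tileable without further obstructions, which constrains how a lozenge cycle can wrap around $P$.

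The main technical step, and the step I expect to be the principal obstacle, is the shadow analysis. I would argue that whenever a difference cycle $\sigma$ surrounds a puncture $P$ of odd side length whose shadow is free of punctures, the tiling constraints force $\sigma$ to enclose an additional odd amount coming from the boundary interaction, or else that such a $P$ with free shadow can never be the unique odd contributor to a single difference cycle. Concretely, I would track how the lozenges immediately below and to the lower-right of $P$ must be oriented (the shadow being puncture-free means these are forced up-down/left-right lozenges along a predictable pattern), and show that twisting a cycle around $P$ necessarily drags in a matched parity contribution. This is the delicate part because it requires controlling the interaction between the odd puncture and the triangle boundary $\mathcal{T}_d$ through the shadow, rather than relying on the simpler ``all even'' hypothesis of Corollary~\ref{cor:same-sign}.

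Once the parity claim is established for a single difference cycle, the remainder is routine: summing over all difference cycles $\sigma_1,\ldots,\sigma_p$ between two tilings $\tau_1,\tau_2$, the total $E$-count is even, so by Corollary~\ref{cor:msgn-cycle}(ii) the two tilings share a perfect matching sign and by Corollary~\ref{cor:lpsgn-cycle} they share a lattice path sign. Since any two tilings agree, all tilings have a common perfect matching sign, whence $\per{Z(T)}$ and $|\det{Z(T)}|$ count the same quantity up to the single overall sign, giving $\per{Z(T)} = |\det{Z(T)}|$. I would close by noting that this genuinely extends Corollary~\ref{cor:same-sign}, since the even-side-length hypothesis there is replaced by the weaker requirement that only odd punctures \emph{casting a shadow onto another puncture} are excluded.
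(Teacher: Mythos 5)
Your overall framework matches the paper's: reduce, via Corollaries~\ref{cor:msgn-cycle} and~\ref{cor:lpsgn-cycle}, to showing that the sum of the $E$-counts of the difference cycles of any two tilings is even; observe that punctures of even side length are harmless for this parity; and then deal with the odd-side-length floating punctures, which by hypothesis have puncture-free shadows. You also correctly identify the decisive geometric fact: if the shadow of such a puncture $P$ contains no punctures, then the lozenges in the shadow are forced, i.e., they are the same in every lozenge tiling of $T$.

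However, at the crucial step you draw the wrong conclusion from this fact, and this is a genuine gap. You propose to show that a difference cycle surrounding $P$ must ``drag in a matched parity contribution,'' or alternatively that $P$ ``can never be the unique odd contributor'' to a cycle. There is no such compensation mechanism, and a proof attempt along those lines would not close. The correct conclusion --- the one the paper draws --- is much stronger and simpler: no cycle of lozenges in any tiling of $T$ can contain $P$ in its interior at all. The reason is that the shadow is a puncture-free region joining $P$ to the boundary of $\mathcal{T}_d$, so any cycle enclosing $P$ would have to use lozenges lying in the shadow; twisting that cycle would then produce a valid tiling of $T$ in which those shadow lozenges are changed, contradicting the fact that the shadow's tiling is forced. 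Hence forced lozenges belong to no cycle, so $P$ lies inside no difference cycle and never contributes to any $E$-count. Consequently every puncture that can lie inside a difference cycle has even side length, the total $E$-count is even, and the conclusion $\per{Z(T)} = |\det{Z(T)}|$ follows exactly as in Corollary~\ref{cor:same-sign}. So the repair is not to hunt for compensating enclosed structure, but to upgrade your (correct) forcedness observation to an impossibility statement about cycles; as written, your proposal leaves its self-declared ``principal obstacle'' unresolved, hedging between two claims, neither of which is established and the first of which describes a mechanism that does not exist.
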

\begin{proof}
    Let $P$ be a floating puncture or a minimal covering region with no punctures in its shadow. Then the shadow of $P$ is
    uniquely tileable, and thus the lozenges in the shadow are fixed in each lozenge tiling of $T$. Hence, no cycle of
    lozenges in any tiling of $T$ can contain $P$. Using Corollary~\ref{cor:msgn-cycle} and
    Corollary~\ref{cor:lpsgn-cycle}, we see that $P$ does not affect the sign of the tilings of $T$.

    Now our assumptions imply that all floating punctures (or minimal covering regions of overlapping punctures) of
    $T$ that can be contained in a difference cycle of two lozenge tilings of $T$ have even side length. Thus, we
    conclude $\per{Z(T)} = |\det{Z(T)}|$ as in the proof of Corollary~\ref{cor:same-sign}.
\end{proof}


\end{document}